\documentclass[11pt]{article}
\usepackage[margin=1.05in]{geometry}
\usepackage{amsmath,amssymb,amsthm,mathtools}
\usepackage{microtype}
\usepackage{enumitem}
\usepackage{booktabs}
\usepackage{array}
\usepackage{hyperref}
\usepackage{xcolor}
\usepackage{bm}
\hypersetup{colorlinks=true,linkcolor=blue!45!black,citecolor=blue!45!black,urlcolor=blue!45!black,
  pdftitle={Gamma-Valued Stein--Tomas Restriction for Schatten Classes in Three Dimensions},
  pdfauthor={Rishad Shahmurov and Veli Shahmurov},
  pdfkeywords={Fourier restriction, Stein--Tomas theorem, Schatten classes, gamma-radonifying operators, Strichartz estimates}}

\newtheorem{theorem}{Theorem}[section]
\newtheorem{proposition}[theorem]{Proposition}
\newtheorem{lemma}[theorem]{Lemma}
\newtheorem{corollary}[theorem]{Corollary}
\newtheorem{remark}[theorem]{Remark}

\newcommand{\R}{\mathbb R}
\newcommand{\C}{\mathbb C}
\newcommand{\E}{\mathbb E}
\newcommand{\ga}{\gamma}
\newcommand{\Tr}{\operatorname{Tr}}

\newcommand{\cF}{\mathcal F}
\newcommand{\cH}{\mathcal H}
\newcommand{\cM}{\mathcal M}
\newcommand{\supp}{\operatorname{supp}}

\title{\bfseries $\gamma$-Valued Stein--Tomas Restriction for Schatten Classes}
\author{Rishad Shahmurov\thanks{Cellular Products Research and Development, 595 East Crossville Rd., Roswell, GA 30075, USA. Email: \texttt{rshahmurov@crimson.ua.edu}.}
\and Veli Shahmurov\thanks{Antalya Bilim University, Department of Industrial Engineering, 07190 Antalya, Turkey; Azerbaijan State Economic University, AZ1001 Baku, Azerbaijan. Emails: \texttt{veli.sahmurov@antalya.edu.tr}, \texttt{veli.sahmurov@gmail.com}.}}
\date{}

\begin{document}
\maketitle

\begin{abstract}
Let $\Sigma\subset\mathbb R^3$ be a compact $C^2$ hypersurface with nonvanishing Gaussian curvature. We prove the sharp matrix-dimension-independent Schatten-valued Stein--Tomas range
\[
R_\Sigma:L^p(\mathbb R^3;S^a)\longrightarrow\gamma(L^2(\Sigma),S^a),
\qquad 1<a<\infty,\quad 1\le p\le\min\{a,4/3\}.
\]
The core endpoint is $E_\Sigma:\gamma(L^2(\Sigma),S^4)\to L^4(\mathbb R^3;S^4)$. After squaring the quartic norm, multiplication lands in the Hilbert--Schmidt class. A local coarea argument splits resonant quartets according to the shorter adjacent frequency chord, producing a dimension-free row/column estimate for elliptic and hyperbolic curvature. We also show that Gaussian type $4/3$ alone does not characterize the corresponding endpoint for arbitrary Banach spaces. Applications include matrix Herglotz waves, a two-dimensional Schr\"odinger $L^4$ estimate, and free spectral-measure bounds.
\end{abstract}

\medskip
\noindent\textbf{Keywords.}
Fourier restriction; Stein--Tomas theorem; Schatten classes; $\gamma$-radonifying operators; noncommutative Khintchine inequality; bilinear restriction; Strichartz estimates; Herglotz waves; spectral measures.

\medskip
\noindent\textbf{2020 Mathematics Subject Classification.}
Primary 42B10, 46L52; Secondary 42B37, 46B09, 47B10, 35B45, 35J05, 35Q41.

\section{Introduction}

\subsection{The restriction problem and the Stein--Tomas line}
The Fourier restriction problem asks whether the Fourier transform of an $L^p$ function, which a priori is defined only almost everywhere in the ambient frequency space, can be meaningfully restricted to a lower-dimensional curved set.  For a smooth compact hypersurface $\Sigma\subset\R^n$ with surface measure $d\sigma$, the classical $L^2$ restriction problem is
\begin{equation}\label{eq:scalar-restriction}
 \|\widehat f\big|_\Sigma\|_{L^2(\Sigma)}
 \le C\|f\|_{L^p(\R^n)}.
\end{equation}
The curvature-sensitive theory emerged through Stein's early restriction observation, the Fefferman--Stein bounds recorded in Fefferman's work on strongly singular convolution operators \cite{Fefferman1970}, and Tomas's 1975 theorem \cite{Tomas1975}.  Tomas obtained the full nonendpoint range and explicitly noted that Stein had supplied the endpoint by complex interpolation; see the historical account in \cite{Oliveira2024} and Stein's monograph \cite{Stein1993}.  For the sphere, the Stein--Tomas range is
\[
 1\le p\le p_{\mathrm{ST}}:=\frac{2(n+1)}{n+3}.
\]
At $n=3$ this becomes $p_{\mathrm{ST}}=4/3$.  The Knapp concentration example shows that the exponent cannot be improved in the scalar $L^p\to L^2$ scale.

The dual extension operator is
\begin{equation}\label{eq:extension}
 E_\Sigma g(x)
 =\int_\Sigma e^{ix\cdot\omega}g(\omega)\,d\sigma(\omega),
\end{equation}
and the scalar endpoint in three dimensions is
\begin{equation}\label{eq:scalar-ST4}
 \|E_\Sigma g\|_{L^4(\R^3)}\lesssim_\Sigma \|g\|_{L^2(\Sigma)}.
\end{equation}
For the sphere $S^2$, the sharp endpoint and its extremizers have been studied in detail; in particular, Foschi proved that constants are global maximizers for the adjoint sphere inequality \cite{Foschi2015}, and the even exponent $4$ makes the convolution structure especially transparent \cite{Oliveira2024}.

The restriction viewpoint quickly entered PDE.  Strichartz extended the extension method to quadratic surfaces and connected it explicitly with spacetime decay for wave and Klein--Gordon type equations \cite{Strichartz1977}.  Carleson--Sj\"olin oscillatory integrals \cite{CarlesonSjolin1972} and Stein's oscillatory-integral theory enlarged the framework from translation-invariant surfaces to variable phases.  Sogge's spectral-cluster estimates on compact manifolds \cite{Sogge1988} and the uniform Sobolev inequalities of Kenig--Ruiz--Sogge \cite{KRS1987} show another major route from restriction geometry to spectral and elliptic PDE.  Keel--Tao's abstract endpoint Strichartz theorem \cite{KeelTao1998} systematized the dispersive consequences.

A second revolution came from bilinear and multilinear restriction.  Tao--Vargas--Vega \cite{TaoVargasVega1998}, Tao's sharp bilinear paraboloid estimate \cite{Tao2003}, the multilinear restriction theorem of Bennett--Carbery--Tao \cite{BCT2006}, the Bourgain--Guth multilinear approach \cite{BourgainGuth2011}, and Guth's polynomial-partitioning work \cite{Guth2018} established restriction as a central meeting point of harmonic analysis, incidence geometry, and dispersive PDE.  These developments concern the scalar restriction conjecture beyond the Stein--Tomas line.  Our problem is different: the ambient exponent remains exactly the classical Stein--Tomas exponent, while the coefficient space becomes noncommutative.

\subsection{Why a coefficient-space theorem is needed: a multichannel dispersive model}
A simple PDE model already shows why a componentwise scalar estimate does not capture the problem studied here.  Consider a matrix-valued dispersive band
\begin{equation}\label{eq:motivating-flow}
 U(t,x)=\int e^{i(x\cdot\xi-t\lambda(\xi))}G(\xi)\,d\xi,
 \qquad G(\xi)\in S^4,
\end{equation}
where the phase $\lambda$ is one branch of a constant-coefficient system and has nondegenerate Hessian on the frequency patch under consideration.  In a system with frequency-dependent polarization one naturally encounters
\[
 G(\xi)=P(\xi)A(\xi)Q(\xi),
\]
with spectral projectors or channel maps $P(\xi),Q(\xi)$ that cannot be simultaneously diagonalized as $\xi$ varies.  For example, even in two internal dimensions the rank-one projections
\[
 P(\xi)=v(\xi)v(\xi)^*,
 \qquad
 v(\xi)=\big(\cos\vartheta(\xi),\sin\vartheta(\xi)\big)^T,
\]
generically satisfy $[P(\xi),P(\eta)]\ne0$.  Estimating every matrix entry separately and then summing either loses the coherent channel geometry or introduces constants depending on the number of channels.  The natural invariant quantity is instead the Gaussian row/column norm of the whole amplitude.  On a nondegenerate graph, the principal estimate proved below gives precisely
\[
 \|U\|_{L^4_{t,x}(S^4)}
 \lesssim
 \|G\|_{\gamma(L^2_\xi,S^4)},
\]
with no dependence on matrix dimension.  Thus the theorem is designed for dispersive systems in which the propagating field itself is operator valued, rather than for a scalar field copied into several independent channels.

A useful second geometry, very different from the sphere, is the saddle
\begin{equation}\label{eq:hyperbolic-model}
 \phi_{\rm hyp}(\xi)=\xi_1^2-\xi_2^2.
\end{equation}
Its Hessian has determinant $-4$, so compact patches have strictly negative Gaussian curvature and fall directly under the local theorem.  The scalar restriction theory for hyperbolic surfaces is classical and has been developed much further in the modern bilinear and polynomial-partitioning literature; see, for example, Buschenhenke--M\"uller--Vargas \cite{BuschenhenkeMullerVargas2023}.  What is new in the present setting is not the scalar saddle geometry itself, but the dimension-free coherent estimate for noncommuting Schatten-valued amplitudes on that geometry.  The same local theorem also covers genuinely nonquadratic perturbations such as
\[
 \phi_\varepsilon(\xi)=\xi_1^2-\xi_2^2+\varepsilon\xi_1^3
\]
on a sufficiently small patch where the Hessian remains nonsingular.  This example makes clear that the argument does not rely on positive curvature, exact quadratic structure, or spherical symmetry.

\subsection{The new coefficient geometry}
Let $S^a$ be the Schatten class on a separable Hilbert space.  We study
\begin{equation}\label{eq:gamma-restriction-intro}
 R_\Sigma f:=\widehat f\big|_\Sigma:
 L^p(\R^3;S^a)
 \longrightarrow
 \ga(L^2(\Sigma),S^a).
\end{equation}
The target is not the Bochner space $L^2(\Sigma;S^a)$.  It is the space of $\gamma$-radonifying operators from the Hilbert space $L^2(\Sigma)$ into $S^a$, the natural Gaussian replacement for Hilbert-valued square functions; see \cite{vanNeerven2010,HNVW2017}.  For a finite orthonormal system $(h_j)$ and coefficients $(x_j)\subset S^a$,
\[
 \left\|\sum_jh_j\otimes x_j\right\|_\ga
 =\left(\E\left\|\sum_j\gamma_jx_j\right\|_{S^a}^2\right)^{1/2}.
\]
For $a\ge2$, noncommutative Khintchine identifies this Gaussian norm with simultaneous row and column square functions \cite{LustPiquard1986,LustPiquardPisier1991}:
\begin{equation}\label{eq:nck-intro}
 \left\|\sum_jh_j\otimes x_j\right\|_\ga
 \asymp_a
 \max\left\{
 \left\|(\sum_jx_j^*x_j)^{1/2}\right\|_a,
 \left\|(\sum_jx_jx_j^*)^{1/2}\right\|_a
 \right\}.
\end{equation}
Thus the first critical non-Hilbert endpoint in dimension three is dual to
\begin{equation}\label{eq:main-extension-intro}
 E_\Sigma:\ga(L^2(\Sigma),S^4)
 \longrightarrow L^4(\R^3;S^4).
\end{equation}

The key point is that the power $4$ is perfectly matched to the Schatten product law
\[
 S^4\cdot S^4\subset S^2,
\]
and $S^2$ is Hilbert.  The scalar identity ``square the $L^4$ norm and use Plancherel'' therefore survives, but the order of matrix multiplication matters.  A fixed ordering produces a genuine crossing commutator.  The proof below avoids estimating that commutator separately.  Instead, on each resonant quartet we compare two adjacent frequency chords and pair the Hilbert--Schmidt factors along the longer one.  On the region where one chord is shorter, its length controls the resonance fiber while the longer chord controls the coarea denominator; the resulting fiber mass is uniformly bounded.  This direct two-region decomposition is the geometric mechanism behind the dimension-free estimate.

\subsection{Main results}
Our first theorem is the dimension-free matrix endpoint.  We refer to it as the Schatten-valued Stein--Tomas extension theorem; the notation $\ga(L^2(\Sigma),S^4)$ is independent of the particular finite-dimensional realization of $S^4$.

\begin{theorem}\label{thm:main-extension}
Let $\Sigma\subset\R^3$ be a compact $C^2$ hypersurface with nonvanishing Gaussian curvature and smooth positive density $d\sigma$.  Then
\begin{equation}\label{eq:main-extension}
 \boxed{
 E_\Sigma:\ga(L^2(\Sigma),S^4)
 \longrightarrow L^4(\R^3;S^4)
 }
\end{equation}
is bounded.  The constant depends on finitely many local $C^2$ geometric bounds, a positive lower curvature bound, the density, and a finite atlas, but not on matrix dimension.
\end{theorem}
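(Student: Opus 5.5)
We reduce by a finite atlas and partition of unity to a single graph patch $\Sigma=\{(\xi,\phi(\xi)):\xi\in U\}$, with $U\subset\R^2$ small, $\phi\in C^2$, and $|\det\nabla^2\phi|\ge\kappa>0$. The density is absorbed into the coefficient, so $d\sigma=d\xi$. The triangle inequality in $L^4(S^4)$ loses only the number of charts. By density it suffices to treat finite sums
\[
 G=\sum_j h_j\otimes x_j ,
\]
with $(h_j)$ orthonormal in $L^2(U)$ and $x_j\in S^4$ finite-rank. We then write $G(\xi)=\sum_j h_j(\xi)x_j$. The constant must be independent of the rank.

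By \eqref{eq:nck-intro} with $a=4$ it is enough to control the row and column square functions, and by symmetry (taking adjoints) we treat the column one. Set
\[
 \Lambda_c:=\Bigl\|\Bigl(\sum_j x_j^*x_j\Bigr)^{1/2}\Bigr\|_{S^4}.
\]
The key identity is
\[
 \|E G\|_{L^4(S^4)}^4=\int_{\R^3}\|(EG)^*(EG)\|_{S^2}^2\,dx,
\]
where
\[
 (EG)^*(EG)(x)=\iint e^{ix\cdot(\omega_2-\omega_1)}G(\xi_1)^*G(\xi_2)\,d\xi_1\,d\xi_2,\qquad \omega_i=(\xi_i,\phi(\xi_i)).
\]
This is an $S^2$-valued Fourier transform of a measure. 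Plancherel in the Hilbert space $S^2$ then turns the quartic norm into a quartet integral
\[
 \int_{\omega_2-\omega_1=\omega_4-\omega_3}\Tr\bigl(G(\xi_2)^*G(\xi_1)G(\xi_3)^*G(\xi_4)\bigr)\,d\mu ,
\]
where $d\mu$ is the coarea measure on the three-dimensional resonance variety $\{\xi_1+\xi_4=\xi_2+\xi_3,\ \phi(\xi_1)+\phi(\xi_4)=\phi(\xi_2)+\phi(\xi_3)\}$ in $U^4$. After the linear constraint we parametrize by $(\xi_1,\xi_2,\xi_3)$. The last constraint has gradient proportional to $\nabla\phi(\xi_3)-\nabla\phi(\xi_1+\xi_3-\xi_2)$, and by nondegeneracy of the Hessian this is comparable to the chord $|\xi_2-\xi_1|$.

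Next we split the quartet region according to which adjacent chord is shorter, $|\xi_1-\xi_2|\le|\xi_1-\xi_3|$ or the reverse, using a smooth angular cutoff if needed. On each region we group the four factors into two Hilbert–Schmidt factors along the longer chord. On the first region we pair $G(\xi_2)^*G(\xi_1)$ against $G(\xi_3)^*G(\xi_4)$ in $S^2$ through Cauchy–Schwarz, with weights. We write $\xi_3=\xi_1+t$ and $\xi_2=\xi_1+s$ with $|s|\le|t|$, where $s$ ranges over the one-dimensional resonance fiber, a curve of length $\lesssim|t|$. The coarea density is $\approx|t|^{-1}$ after the Hessian lower bound. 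Schur's test then gives a fiber mass $\int_{\text{fiber},|s|\le|t|}|t|^{-1}\,d\ell(s)\lesssim 1$, uniformly in the data, and this holds for elliptic and hyperbolic Hessians alike. The result is a bound by
\[
 \iint\|G(\xi)^*G(\eta)\|_{S^2}^2\,d\xi\,d\eta .
\]
We must choose the pairing so that this quantity appears rather than an ordered product with a commutator. Then
\[
 \iint\|G(\xi)^*G(\eta)\|_{S^2}^2=\Tr\Bigl(\sum_{j,k}x_j^*x_k\,\langle h_k,h_j\rangle\cdots\Bigr),
\]
and by orthonormality this reduces to $\Tr\bigl((\sum_j x_j^*x_j)^2\bigr)=\Lambda_c^4$. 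Hence $\|EG\|_4^4\lesssim\Lambda_c^4$, and via \eqref{eq:nck-intro} this gives $\|EG\|_4\lesssim\|G\|_\ga$. For the second region we repeat the argument with the roles of $\xi_2,\xi_3$ swapped, which is why both row and column structure may be needed.

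The main obstacle is the Cauchy–Schwarz/Schur step: making the pairing of matrix factors compatible with the noncommutative ordering in the trace, so that no crossing commutator term survives. The grouping must lead to $\iint\|G(\xi)^*G(\eta)\|_{S^2}^2$ and not to a mixed row-column term that noncommutative Khintchine does not control. The first thing to try is to write the trace as $\langle G(\xi_1)^*G(\xi_2)\cdot,\,\cdot\rangle$, pairing $S^2$-products adjacent in the cyclic order, and to check that the chord-selection only permutes which adjacent pair is grouped. Uniform control of the coarea fiber mass near the diagonal is the second delicate point. For hyperbolic $\phi$ the fiber curves need not be compact ovals, so the bound there uses only $|s|\le|t|$ and the $C^2$ bounds.
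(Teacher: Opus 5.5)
Your overall architecture matches the paper's: square the quartic norm, use Plancherel in $S^2$, pass to resonance quartets, split by the shorter adjacent chord, bound a short fiber against a long coarea denominator, then apply noncommutative Khintchine. Your stated principle, group along the longer chord, is also right. The gap is in the step you flag as the main obstacle: the groupings you actually write down are attached to the wrong regions, and as written the Schur step does not close.

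Put $s=\xi_2-\xi_1=\xi_4-\xi_3$, $t=\xi_3-\xi_1=\xi_4-\xi_2$ and $G_i=G(\xi_i)$. On $|s|\le|t|$ you pair $G_2^*G_1$ with $G_3^*G_4$, which groups along the \emph{short} chord $s$. The factors $\|G(\xi_1)^*G(\xi_1+s)\|_2$ and $\|G(\xi_1+t)^*G(\xi_1+s+t)\|_2$ then depend on the fiber variable $s$. So Schur's test forces you to disintegrate in $t$ at fixed $(\xi_1,s)$. That fiber has length $\sim1$ and coarea denominator $\sim|s|$, so its mass is $\sim|s|^{-1}$ and the bound fails near the diagonal. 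The fiber bound you quote ($s$ on a curve of length $\lesssim|t|$, density $|t|^{-1}$) is usable only with the other cyclic grouping,
\[
 |\Tr(G_2^*G_1G_3^*G_4)|=|\langle G_2G_4^*,G_1G_3^*\rangle_{S^2}|
 \le F_t(\xi_1)\,F_t(\xi_1+s),\qquad F_t(\xi)=\|G(\xi)G(\xi+t)^*\|_2 ,
\]
combined with the measure-preserving involution $(\xi_1,s)\mapsto(\xi_1+s,-s)$ of the resonance set. This gives $\iint\|G(\xi)G(\eta)^*\|_2^2\,d\xi\,d\eta=\Tr\bigl((\int G^*G)^2\bigr)=\Lambda_c^4$.

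Your trace identity is also reversed. In fact $\iint\|G(\xi)^*G(\eta)\|_2^2=\Tr\bigl((\int GG^*)^2\bigr)=\Tr\bigl((\sum_jx_jx_j^*)^2\bigr)=\Lambda_r^4$, the \emph{row} quantity. Your grouping along $s$ is the correct one on the complementary region $|t|<|s|$, with the fiber taken in $t$, and there it produces $\Lambda_r^4$.

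That region cannot be obtained by swapping $\xi_2,\xi_3$. The resonance measure is symmetric under the swap, but the integrand becomes $\Tr(G_3^*G_1G_2^*G_4)$, which differs from $\Tr(G_2^*G_1G_3^*G_4)$ for noncommuting data. This is exactly the crossing term you wanted to avoid. You must instead use the other adjacent grouping of the \emph{same} trace.

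For the same reason, the reduction ``by symmetry treat the column one'' and the conclusion $\|EG\|_4^4\lesssim\Lambda_c^4$ are false. Take the pure row $G=\sum_{j\le N}h_je_{1j}$ with matrix units $e_{1j}$, where $h_j$ are the $L^2$-normalized indicators of $N\sim\delta^{-2}$ disjoint $\delta$-caps. Then $\sum_j|Eh_j|^2\gtrsim1$ on a ball of radius $\sim\delta^{-1}$, so $\|EG\|_{L^4(S^4)}^4\gtrsim N^{3/2}$, while $\Lambda_c^4=N$.

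What the two-region argument actually proves is $\|EG\|_4^4\lesssim\Lambda_c^4+\Lambda_r^4$, with each square function coming from its own region. This is why the maximum in noncommutative Khintchine is genuinely needed.
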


By $\gamma$-trace duality this gives the critical restriction corollary, which is the small-Schatten endpoint used in the classification.

\begin{corollary}\label{cor:critical-restriction}
Under the assumptions of Theorem~\ref{thm:main-extension},
\begin{equation}\label{eq:critical-restriction}
 \boxed{
 R_\Sigma:
 L^{4/3}(\R^3;S^{4/3})
 \longrightarrow
 \ga(L^2(\Sigma),S^{4/3})
 }
\end{equation}
is bounded with a matrix-dimension-independent constant.
\end{corollary}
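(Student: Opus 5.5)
The corollary follows from Theorem~\ref{thm:main-extension} by duality. The work is in identifying the dual of $\ga(L^2(\Sigma),S^{4/3})$ correctly and checking that the restriction operator $R_\Sigma$ is the adjoint of $E_\Sigma$ under the relevant pairings.

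First, recall $\gamma$-trace duality. For a Banach space $X$ and Hilbert space $H$ there is a contractive trace pairing $\ga(H,X)\times\ga(H,X^*)\to\C$, given on finite-rank elements by
\[
 \Big\langle \sum_j h_j\otimes x_j,\ \sum_k h_k\otimes x_k^*\Big\rangle=\sum_j\langle x_j,x_j^*\rangle ,
\]
with $(h_j)$ orthonormal. Its contractivity is just Cauchy--Schwarz in $L^2(\Omega)$ applied to the Gaussian sums $\sum_j\gamma_jx_j$ and $\sum_j\gamma_jx_j^*$. Since $S^{4/3}$ is reflexive and has finite cotype (indeed it is $K$-convex), the pairing induces an isomorphism $\ga(H,S^{4/3})^*\simeq\ga(H,S^4)$. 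Here $S^4=(S^{4/3})^*$ under the trace pairing $\langle x,y\rangle=\Tr(xy)$. The isomorphism constant depends only on the $K$-convexity constant of $S^{4/3}$, hence not on the matrix dimension. One may instead argue through the noncommutative Khintchine description \eqref{eq:nck-intro} on the $S^4$ side together with its dual (sum-decomposition) form on the $S^{4/3}$ side. That this step is dimension-free is the only genuinely Banach-space-theoretic point, and I expect it to be the main obstacle, modest as it is: one must make sure the duality constant is controlled by $K$-convexity, or by Lust-Piquard--Pisier, uniformly in the dimension.

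Second, compute the norm by duality. Take $f\in L^{4/3}(\R^3;S^{4/3})$ simple with Schwartz scalar components. Then $R_\Sigma f$ is a continuous $S^{4/3}$-valued function on $\Sigma$, and so (Hilbert--Schmidt-like, finite rank up to approximation) an element of $\ga(L^2(\Sigma),S^{4/3})$. We have
\[
 \|R_\Sigma f\|_{\ga(L^2(\Sigma),S^{4/3})}\lesssim \sup\big\{|\langle R_\Sigma f,g\rangle| : \|g\|_{\ga(L^2(\Sigma),S^4)}\le1\big\},
\]
with $g$ ranging over finite sums $\sum_k h_k\otimes y_k$, which are dense. For such $g$, Fubini gives
\[
 \langle R_\Sigma f,g\rangle=\int_\Sigma\Tr\big(\widehat f(\omega)g(\omega)\big)\,d\sigma(\omega)=\int_{\R^3}\Tr\big(f(x)\,E_\Sigma \tilde g(x)\big)\,dx ,
\]
where $\tilde g$ is $g$ up to the sign or conjugation convention in $\widehat f(\omega)=\int e^{-ix\cdot\omega}f(x)\,dx$. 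The map $\tilde g$ has the same $\gamma$-norm as $g$, because conjugating or reflecting an orthonormal system leaves the Gaussian norm invariant.

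Third, apply Hölder in $L^{4/3}\times L^4$ and in $S^{4/3}\times S^4$, followed by Theorem~\ref{thm:main-extension}:
\[
 |\langle R_\Sigma f,g\rangle|\le\|f\|_{L^{4/3}(S^{4/3})}\,\|E_\Sigma\tilde g\|_{L^4(S^4)}\lesssim\|f\|_{L^{4/3}(S^{4/3})}\,\|g\|_\ga .
\]
A density argument then extends $R_\Sigma$ to all of $L^{4/3}(\R^3;S^{4/3})$. The constant is the product of the constant of Theorem~\ref{thm:main-extension} and the dimension-free duality constant, so it is independent of the matrix dimension.
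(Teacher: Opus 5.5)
Your proposal is correct and follows the paper's own route: $K$-convexity of $S^{4/3}$ gives the dimension-free $\gamma$-trace duality $\ga(L^2(\Sigma),S^{4/3})^*\simeq\ga(L^2(\Sigma),S^4)$, and the corollary is then the adjoint form of Theorem~\ref{thm:main-extension}; you simply write out the pairing, Fubini and H\"older steps that the paper leaves implicit. One small precision: the hypothesis that actually does the work is $K$-convexity (equivalently, nontrivial type), not reflexivity plus finite cotype, which in general would not yield the norming estimate you use.
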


Combining the critical endpoint with the Hilbert, large-Schatten, and universal $L^1$ branches gives the sharp Schatten Stein--Tomas range in $\R^3$.

\begin{theorem}\label{thm:classification}
Let $1<a<\infty$.  Then
\begin{equation}\label{eq:classification}
 R_\Sigma:L^p(\R^3;S^a)
 \longrightarrow\ga(L^2(\Sigma),S^a)
\end{equation}
is bounded for
\begin{equation}\label{eq:range}
 \boxed{1\le p\le \min\{a,4/3\}.}
\end{equation}
For every such curved surface the scalar rank-one Knapp example forces $p\le4/3$.  For $a<2$, a dimension-independent estimate on the infinite Schatten class (equivalently, estimates uniform over all matrix dimensions) also forces $p\le a$ by Gaussian type.  Thus \eqref{eq:range} is the sharp matrix-dimension-independent range.
\end{theorem}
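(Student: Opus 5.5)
We will prove Theorem~\ref{thm:classification} by complex interpolation between four branches, and then check sharpness through two examples. Throughout, $\mathcal G_a:=\ga(L^2(\Sigma),S^a)$. For $1<a<\infty$ the space $S^a$ is UMD and $K$-convex, so the $\gamma$-spaces interpolate: $[\mathcal G_{a_0},\mathcal G_{a_1}]_\theta=\mathcal G_{a_\theta}$ with equivalent norms, and the Bochner spaces $L^{p}(\R^3;S^a)$ interpolate in the usual way. We therefore need only bounds at the vertices of the admissible region $\{(1/p,1/a): 1\le p\le\min\{a,4/3\}\}$ and then interpolate bilinearly in the pair $(p,a)$.

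\textbf{Boundedness.} There are four branches.
\begin{enumerate}[label=(\roman*)]
\item \emph{The $L^1$ branch, valid for every $a$.} For $f\in L^1(\R^3;S^a)$ we write $\widehat f|_\Sigma=\int f(x)\,e_x\,dx$ with $e_x(\omega)=e^{-ix\cdot\omega}$. Each map $h\mapsto\langle h,\bar e_x\rangle f(x)$ is rank one, with $\gamma$-norm $\|e_x\|_{L^2(\Sigma)}\|f(x)\|_{S^a}=\sigma(\Sigma)^{1/2}\|f(x)\|_{S^a}$. The triangle inequality in $\mathcal G_a$ then gives $\|R_\Sigma f\|_{\mathcal G_a}\lesssim\|f\|_{L^1(S^a)}$.
\item \emph{The Hilbert branch $a=2$, $p=4/3$.} Here $\mathcal G_2=L^2(\Sigma;S^2)$ isometrically, and $S^2$ is a Hilbert space. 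The scalar Stein--Tomas theorem therefore extends to $S^2$-valued functions, since the $TT^*$ argument is valid verbatim for Hilbert-valued functions.
\item \emph{The critical branch $a=p=4/3$.} This is Corollary~\ref{cor:critical-restriction}.
\item \emph{The large-Schatten branch $a\ge 2$, $p=4/3$.} By duality, $\mathcal G_a^*=\mathcal G_{a'}$, and $E_\Sigma$ is the adjoint of $R_\Sigma$. So it suffices to bound $E_\Sigma:\mathcal G_{a'}\to L^4(S^{a'})$ for $a'\in[1,2]$, which lies outside our tools. Instead we interpolate directly on the restriction side. Between (ii) at $a=2$ and the trivial endpoint $a=\infty$ there is a problem, since $S^\infty$ is not $K$-convex. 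We therefore aim for the large-Schatten endpoints differently: Theorem~\ref{thm:main-extension} gives $E:\mathcal G_4\to L^4(S^4)$, whose dual is $R:L^{4/3}(S^{4/3})\to\mathcal G_{4/3}$. For $a>2$ we interpolate $E$ between $\mathcal G_2\to L^4(S^2)$ (the Hilbert case) and $\mathcal G_4\to L^4(S^4)$, obtaining $E:\mathcal G_b\to L^4(S^b)$ for $2\le b\le4$. Dualizing gives $R:L^{4/3}(S^{a})\to\mathcal G_{a}$ for $4/3\le a\le2$.
\end{enumerate}
For $a\ge 2$ we use the Hilbert branch together with a row/column argument. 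By \eqref{eq:nck-intro}, $\|R_\Sigma f\|_{\mathcal G_a}$ is comparable to the $S^a$ norms of the row and column square functions. For $p=4/3\le 2\le a$, these are controlled by Minkowski plus the $L^2(\Sigma;\ell^2)$-valued Stein--Tomas estimate, applied to $f$ viewed as taking values in $S^a(\ell^2_c)$. Concretely, the column square function equals $\|\widehat f|_\Sigma\|$ in $S^a[L^2(\Sigma)_c]$. We plan to use the Hilbert-valued Stein--Tomas bound for $L^{4/3}(\R^3;H)\to L^2(\Sigma;H)$, tensored with $S^a$ through a factorization $f=uv$ of Pisier type. This step is the one we expect to be most delicate. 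If it fails, we fall back on interpolating between (ii) and the $L^1$ branch.

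Once the vertices $(1,a)$ for all $a$, $(4/3,a)$ for all $a\ge 4/3$, and the line $p=a\le4/3$ (obtained from (i) and (iii) by interpolation) are in hand, the full region \eqref{eq:range} follows by interpolating along segments.

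\textbf{Sharpness.} There are two necessary conditions.
\begin{enumerate}[label=(\alph*)]
\item \emph{The Knapp example forces $p\le 4/3$.} Take $f=\phi\otimes x_0$ with $x_0$ a fixed rank-one matrix. The $\gamma$-norm then reduces to $\|\widehat\phi|_\Sigma\|_{L^2}\|x_0\|$, and the scalar Knapp cap example forces $p\le4/3$.
\item \emph{Gaussian type forces $p\le a$ when $a<2$.} Take $f=\sum_{j\le N}\phi(\cdot-y_j)\otimes x_j$ with widely separated $y_j$. The pieces $\widehat f|_\Sigma$ then become nearly orthogonal in $L^2(\Sigma)$. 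Hence $\|R_\Sigma f\|_\gamma\approx(\E\|\sum_j\gamma_jx_j\|^2)^{1/2}$, while $\|f\|_{L^p}\approx(\sum_j\|x_j\|^p)^{1/p}$. A uniform bound would therefore give Gaussian cotype-type inequality of order $p$ in $S^a$. Choosing $x_j=e_{jj}$ gives a left side of order $N^{1/a}$ and a right side of order $N^{1/p}$, which forces $p\le a$.
\end{enumerate}

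The main obstacle is the large-Schatten branch, namely the transfer of the Hilbert-valued Stein--Tomas estimate to row/column square functions with constants independent of matrix dimension.
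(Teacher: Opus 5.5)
\textbf{The large-Schatten branch is a genuine gap.} Most of your proposal matches the paper: the $L^1$, Hilbert and critical endpoints, the diagonal $p=a\le4/3$, and both necessity arguments. Your handling of $4/3\le a\le2$ interpolates $E_\Sigma$ between $S^2$ and $S^4$ and then dualizes. That is just the dual form of the paper's interpolation of $R_\Sigma$ at fixed $p=4/3$.

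What you never establish is the branch $a>2$, $p=4/3$. You correctly reduce it by $\gamma$-duality to bounding $E_\Sigma:\ga(L^2(\Sigma),S^b)\to L^4(\R^3;S^b)$ for $1<b\le2$. You then declare this outside your tools, but it is exactly Proposition~\ref{prop:factorization} applied to the scalar bound $E_\Sigma:L^2(\Sigma)\to L^4(\R^3)$.

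The factorization must be done on the $\gamma$ side of the extension operator, not on $f$ in $L^{4/3}$:
\begin{itemize}
\item With $A=(\sum_jx_j^*x_j)^{1/2}$, write $x_j=y_jB$, where $B=A^{1-b/2}$ and $y_j=x_jA^{b/2-1}$.
\item Then $\sum_j\|y_j\|_{S^2}^2=\Tr A^b$ and $\|B\|_{S^r}=(\Tr A^b)^{1/r}$, with $1/b=1/2+1/r$.
\item Since $S^2$ is Hilbert, $E_\Sigma$ tensorizes to $S^2$-valued coefficients. Schatten H\"older then gives $\|\sum_j(E_\Sigma h_j)x_j\|_{L^4(S^b)}\lesssim\|A\|_{S^b}$.
\item The row case is symmetric. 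The $b\le2$ form of noncommutative Khintchine (infimum over column$+$row decompositions) then gives the $\gamma$ bound.
\end{itemize}
Dualizing at $b=a'$ yields $R_\Sigma:L^{4/3}(\R^3;S^a)\to\ga(L^2(\Sigma),S^a)$ for every $2\le a<\infty$. Interpolation with the $L^1$ branch then fills in $1\le p<4/3$.

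\textbf{Your fallback cannot replace this.} Interpolate $(p,a)=(4/3,2)$ with an $L^1$ endpoint $(1,a_1)$ at parameter $\theta$. Then $1/a\ge(1-\theta)/2$, so reaching $a>2$ forces $\theta\ge1-2/a>0$, and hence $1/p=3/4+\theta/4\ge1-1/(2a)$. You would get at best $p\le2a/(2a-1)$, e.g.\ $p\le8/7$ at $a=4$. Even that is reached only in the limit $a_1\to\infty$, where $S^\infty$ is not $K$-convex.

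\textbf{Two minor points.}
\begin{itemize}
\item In sharpness (b), the inequality you obtain is Gaussian \emph{type} $p$, not cotype. Your $e_{jj}$ computation is nonetheless correct.
\item The diagonal $p=a$ can only be reached using the endpoint $L^1(S^1)\to\ga(S^1)$. This lies outside your stated $K$-convex interpolation scale. There, interpolate the finite Gaussian sums into $L^2(\Omega;S^a)$ and pass to $\ga$ via $\ga_\infty=\ga$, as the paper does.
\end{itemize}
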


For the paraboloid the proof is especially simple.  The following corollary is the two-dimensional Schr\"odinger $L^4$ estimate with $\gamma$-radonifying Schatten data.

\begin{corollary}\label{cor:schrodinger}
For finite-rank Schwartz $S^4$-valued data $f$ on $\R^2$,
\begin{equation}\label{eq:schrodinger}
 \boxed{
 \|e^{it\Delta}f\|_{L^4(\R_t\times\R_x^2;S^4)}
 \lesssim
 \|\widehat f\|_{\ga(L^2(\R^2),S^4)}.
 }
\end{equation}
The estimate extends by completion to the corresponding $\gamma$-data space.
\end{corollary}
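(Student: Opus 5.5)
We deduce Corollary~\ref{cor:schrodinger} from the local version of Theorem~\ref{thm:main-extension} applied to paraboloid patches, followed by a parabolic rescaling argument. Write
\[
 e^{it\Delta}f(x)=(2\pi)^{-2}\int_{\R^2}e^{i(x\cdot\xi-t|\xi|^2)}\widehat f(\xi)\,d\xi ,
\]
which is the extension operator of the graph $\Sigma=\{(\xi,-|\xi|^2)\}$, with measure $d\xi$, evaluated at $(x,t)\in\R^3$. The Hessian of $-|\xi|^2$ is $-2I$, so every compact patch has Gaussian curvature bounded below. For data with $\supp\widehat f\subset B(0,1)$, Theorem~\ref{thm:main-extension}, in its local graph form, gives
\[
 \|e^{it\Delta}f\|_{L^4(S^4)}\le C\|\widehat f\|_{\ga(L^2(B(0,1)),S^4)} .
\]
The constant depends only on the $C^2$ bounds and the curvature of the unit patch, and not on the matrix dimension.

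The rescaling step extends this to supports in $B(0,\lambda)$ with the same constant. For $\lambda>0$ put $\widehat f_\lambda(\xi)=\widehat f(\lambda\xi)$. The parabolic scaling $(t,x)\mapsto(\lambda^2t,\lambda x)$ gives
\[
 e^{it\Delta}f(x)=\lambda^{2}\,(e^{i\cdot\Delta}f_\lambda)(\lambda^2 t,\lambda x)\cdot(\text{const}).
\]
Hence
\[
 \|e^{it\Delta}f\|_{L^4_{t,x}}=\lambda^{2-4/4}\|e^{it\Delta}f_\lambda\|_{L^4},
\]
the Jacobian of the change of variables being $\lambda^{-4}$. On the data side, dilation $g\mapsto g(\lambda\cdot)$ is $\lambda^{-1}$ times a unitary operator on $L^2(\R^2)$. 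The $\gamma$-norm satisfies the ideal property $\|u\circ T\|_\ga\le\|T\|\,\|u\|_\ga$ for bounded operators $T$, with equality for unitaries. It follows that
\[
 \|\widehat f_\lambda\|_{\ga(L^2,S^4)}=\lambda^{-1}\|\widehat f\|_{\ga(L^2,S^4)} .
\]
Both sides therefore scale like $\lambda$, so the inequality is scale invariant. Since $\widehat f$ is Schwartz and of finite rank, it can be approximated by compactly supported pieces. Cutting off to $B(0,\lambda)$ (for instance with smooth cutoffs $\chi(\cdot/\lambda)$) and applying the unit-ball estimate to $f_\lambda$ gives the estimate with a uniform constant.

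We then let $\lambda\to\infty$. For finite-rank Schwartz data we have $\chi(\cdot/\lambda)\widehat f\to\widehat f$ in $\ga(L^2,S^4)$. This holds because, on a fixed finite-dimensional range, the $\gamma$-norm is equivalent to the $L^2(\R^2;S^4)$ Bochner norm up to constants depending on the rank, and dominated convergence applies there. The left-hand sides converge pointwise and hence, by Fatou's lemma, the limit obeys \eqref{eq:schrodinger}. The extension to the $\gamma$-data space then follows by density of finite-rank Schwartz functions in $\ga(L^2(\R^2),S^4)$ and completeness of $L^4(S^4)$.

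The main obstacle is the first step: we must check that Theorem~\ref{thm:main-extension} applies to a graph patch with Lebesgue measure $d\xi$ rather than to a closed compact surface. This should be immediate from the local atlas formulation, since the density $\sqrt{1+4|\xi|^2}^{-1}$ relating $d\xi$ to surface measure is smooth and positive on the patch. Multiplication by such a density is a bounded operator on $L^2$, so the ideal property of the $\gamma$-norm absorbs it.
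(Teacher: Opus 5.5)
Your argument is correct, but it removes the dependence on the support size by a different route than the paper.

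\textbf{The paper's route.} The paper does not rescale. In Section~\ref{sec:PDE} it reopens the proof of Theorem~\ref{thm:local} and observes that for $\phi(\xi)=|\xi|^2$ the resonance function is exactly $\Phi_{\xi,v}(u)=-2u\cdot v$ for all $\xi,u,v\in\R^2$. Each $u$-fiber is therefore the full line $v^\perp$, and the coarea denominator is exactly $2|v|$. Hence the short-chord bound of Lemma~\ref{lem:fiber-gain} holds with constant $1$ on all of $\R^2$, with no shrinking of $U$ (cf.\ Remark~\ref{rem:quadratic-model}). The two-chord proof then runs verbatim for data of arbitrary compact support with a support-independent constant. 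The corollary follows after fixing the Fourier normalization; the passage to Schwartz data is left implicit.

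\textbf{Your route.} You use Theorem~\ref{thm:local} on one fixed patch as a black box and exploit the exact parabolic symmetry. The $L^4_{t,x}$ side scales like $\lambda^{2-1}$, and the $\gamma$ side scales like $\lambda^{-1}$ via the right ideal property with a unitary dilation. So the unit-ball constant transfers to $B(0,\lambda)$. Your cutoff, dominated convergence and Fatou limit then makes explicit the step from compactly supported to Schwartz data that the paper skips.

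\textbf{Comparison.} Your approach is more modular: it needs no re-inspection of the coarea re-disintegration or the $K_\varepsilon$ limit. It also works unchanged for any homogeneous quadratic phase, e.g.\ the saddle in \eqref{eq:hyperbolic-L4}. The paper's approach explains structurally why the paraboloid is ``especially simple'' and yields the global constant directly, without invoking a scaling symmetry.

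\textbf{Your ``main obstacle.''} This is not actually an obstacle. Theorem~\ref{thm:local} is already formulated with Lebesgue measure $d\xi$ on the graph parameter domain. Since $D^2\phi$ is constant, a convex neighbourhood of $\overline{B(0,1)}$ needs no shrinking, so your unit-ball step is a direct application with no density correction required.
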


\subsection{At a glance: what the generalization changes}
The following table separates exact scalar recovery from genuinely new coefficient-space information.

\begin{center}
\small
\renewcommand{\arraystretch}{1.18}
\begin{tabular}{>{\raggedright\arraybackslash}p{0.20\textwidth}>{\raggedright\arraybackslash}p{0.32\textwidth}>{\raggedright\arraybackslash}p{0.38\textwidth}}
\toprule
Classical object & Classical statement & Present coefficient-space statement \\
\midrule
Stein--Tomas in $\R^3$ & $L^{4/3}(\R^3)\to L^2(\Sigma)$ & $L^{4/3}(\R^3;S^{4/3})\to\ga(L^2(\Sigma),S^{4/3})$ \\
Herglotz/Helmholtz & $L^2(S^2)\to L^4_x$ & coherent noncommuting angular amplitudes $\ga(L^2,S^4)\to L^4_x(S^4)$ \\
$2D$ Schr\"odinger & $\|e^{it\Delta}f\|_{L^4_{t,x}}\lesssim\|f\|_2$ & $\|e^{it\Delta}f\|_{L^4_{t,x}(S^4)}\lesssim\|\widehat f\|_\ga$ \\
Free spectral measure & scalar $L^{4/3}\to L^4$ density estimate & $S^{4/3}$-valued spectral density with the same scaling \\
Gaussian surface fields & scalar/Hilbert random amplitudes & radonification of coherent $S^4$-valued cylindrical data \\
Variable phase / resolvent & Carleson--Sj\"olin and KRS theories & not claimed here; these are the next coefficient-space problems \\
\bottomrule
\end{tabular}
\end{center}

The first three rows are direct specializations or consequences of the theorem.  The last row is deliberately separated: a compact-surface theorem does not by itself provide variable-phase summation or the principal-value cancellation of a resolvent.

\subsection{Relation to trace-ideal and noncommutative restriction results}
Frank and Sabin proved orthonormal-function analogues of Stein--Tomas and Strichartz, together with Schatten trace-ideal versions of uniform Sobolev estimates and applications to limiting absorption, Hartree dynamics, Lieb--Thirring bounds, and scattering \cite{FrankSabin2017}.  Their restriction theorem controls scalar densities generated by orthonormal systems.  The present theorem controls an operator-valued surface amplitude itself through its two-sided row/column Gaussian geometry.  Neither formulation formally contains the other.

Likewise, Fourier restriction on quantum Euclidean spaces \cite{HongLaiWang2023} and Fourier--Wigner restriction for Schatten operators \cite{LuefSamuelsen2025} are genuinely noncommutative restriction theories, but the Fourier transforms there are noncommutative/phase-space transforms.  M\"uller recently gave a pseudodifferential-operator proof of the Fourier--Wigner/classical phase-space equivalence and improved the dependence of the corresponding constants on the support diameter \cite{Muller2026}.  These results remain different from \eqref{eq:gamma-restriction-intro}: here the physical-variable Fourier transform is the ordinary Euclidean transform of an $S^a$-valued function, and the surface target is the Banach-valued $\gamma$ space.

\subsection{How the classical theory is recovered}
Theorem~\ref{thm:main-extension} contains the classical endpoint in two elementary ways.  Setting the coefficient algebra equal to $\C$ gives \eqref{eq:scalar-ST4} directly.  More generally, taking $G(\omega)=g(\omega)x_0$ with a fixed rank-one $x_0$ reduces the estimate to scalar Stein--Tomas.  Interpolation with the trivial $L^1\to L^2(\Sigma)$ restriction estimate recovers the full scalar range $1\le p\le4/3$.

A Hilbert space can also be embedded as a pure column in a Schatten class, so the theorem contains the Hilbert-valued $L^2\to L^4$ extension estimate as a special coefficient configuration.  In the noncommutative setting this pure-row/pure-column case is much easier than the general theorem because only one square function survives.  The genuinely new content is simultaneous left/right interaction.

The PDE consequences should be read similarly.  Corollary~\ref{cor:schrodinger} reduces, for scalar or rank-one data, to the classical two-dimensional $L^4$ Schr\"odinger Strichartz inequality.  The Herglotz-wave consequence in Section~\ref{sec:PDE} reduces to the classical $L^4$ bound for Fourier extension from $S^2$.  By contrast, the cone has one flat principal direction, so the wave-equation restriction problem is not a formal special case of Theorem~\ref{thm:main-extension}; Strichartz's quadratic-surface theory and later cone restriction estimates remain distinct inputs.

\section{Schatten classes and \texorpdfstring{$\gamma$}{gamma}-radonifying operators}\label{sec:prelim}

\subsection{Schatten classes}
Let $\cH$ be a separable Hilbert space.  For $1\le a<\infty$, the Schatten class $S^a(\cH)$ consists of compact operators $A$ whose singular values $s_j(A)$ belong to $\ell^a$:
\[
 \|A\|_{S^a}
 =\left(\sum_js_j(A)^a\right)^{1/a}
 =\big(\Tr|A|^a\big)^{1/a}.
\]
We write $S^\infty=B(\cH)$ when only the operator norm is relevant.  Standard operator-ideal facts used below are collected, for example, in \cite{Simon2005}; the type/cotype statement is classical in the local theory of Banach spaces, see \cite{Tomczak1989}:
\begin{enumerate}[label=(\roman*)]
\item duality $(S^a)^*=S^{a'}$ for $1<a<\infty$ under the trace pairing $\Tr(A^*B)$;
\item Schatten H\"older,
\[
 \|AB\|_{S^r}\le \|A\|_{S^p}\|B\|_{S^q},
 \qquad \frac1r=\frac1p+\frac1q;
\]
\item complex interpolation $[S^{a_0},S^{a_1}]_\theta=S^a$ with $1/a=(1-\theta)/a_0+\theta/a_1$;
\item $S^2$ is the Hilbert--Schmidt space with inner product $\langle A,B\rangle_{HS}=\Tr(A^*B)$;
\item $S^a$ has Gaussian type $\min\{a,2\}$ and Gaussian cotype $\max\{a,2\}$, up to the usual endpoint interpretation;
\item the norm is unitarily invariant, $\|UAV\|_{S^a}=\|A\|_{S^a}$ for unitaries $U,V$, and for an operator of rank at most $N$,
\[
 \|A\|_{S^b}\le \|A\|_{S^a}
 \le N^{1/a-1/b}\|A\|_{S^b},
 \qquad 1\le a\le b\le\infty.
\]
In particular all Schatten norms agree on a rank-one partial isometry.
\end{enumerate}

The scale has familiar operator-theoretic meanings.  $S^1$ is the trace class, the natural home of density matrices and nuclear operators; $S^2$ is the Hilbert--Schmidt class; $S^\infty$ is the bounded-operator endpoint.  The class $S^4$ is the first genuinely non-Hilbert Schatten space for which a quartic norm becomes Hilbertian after one multiplication.  This is exactly the algebraic feature exploited below.

The Hilbert--Schmidt product identity is the reason $a=4$ is special:
\begin{equation}\label{eq:S4HS}
 \|AB^*\|_{S^2}^2
 =\Tr(A^*A B^*B),
 \qquad A,B\in S^4.
\end{equation}
This is exactly the noncommutative analogue of the scalar product appearing after squaring an $L^4$ extension norm.

\subsection{\texorpdfstring{$\gamma$}{gamma}-radonifying operators}
Let $H$ be a Hilbert space and $X$ a Banach space.  If
\[
 T=\sum_{j=1}^N h_j\otimes x_j:H\to X
\]
with $(h_j)$ orthonormal, set
\begin{equation}\label{eq:gamma-def}
 \|T\|_{\ga(H,X)}
 =\left(\E\left\|\sum_{j=1}^N\gamma_jx_j\right\|_X^2\right)^{1/2}.
\end{equation}
The completion of finite-rank operators is $\ga(H,X)$; see \cite{vanNeerven2010,HNVW2017}.  The ideal property states that if $A:H_0\to H$ and $B:X\to Y$ are bounded, then
\begin{equation}\label{eq:gamma-ideal}
 \|BTA\|_{\ga(H_0,Y)}
 \le \|B\|\,\|T\|_\ga\,\|A\|.
\end{equation}
For Hilbert $X$, $\ga(H,X)$ is the Hilbert--Schmidt class.  For non-Hilbert $X$, the norm records Gaussian geometry that cannot in general be replaced by a Bochner $L^2$ norm.

For $2\le a<\infty$, noncommutative Khintchine gives
\begin{equation}\label{eq:NCK}
 \left(\E\left\|\sum_j\gamma_jx_j\right\|_a^2\right)^{1/2}
 \asymp_a
 \max\left\{
 \left\|(\sum_jx_j^*x_j)^{1/2}\right\|_a,
 \left\|(\sum_jx_jx_j^*)^{1/2}\right\|_a
 \right\}.
\end{equation}
For $1<a\le2$, the corresponding norm is the infimum over decompositions $x_j=y_j+z_j$ of the sum of the column norm of $(y_j)$ and the row norm of $(z_j)$; see \cite{LustPiquard1986,LustPiquardPisier1991}.

For an $S^4$-valued simple function $G$ on a measure space, write
\begin{equation}\label{eq:CG-RG}
 C_G=\left(\int G^*G\right)^{1/2},
 \qquad
 R_G=\left(\int GG^*\right)^{1/2},
\end{equation}
and
\begin{equation}\label{eq:Gamma4}
 \|G\|_{\Gamma_4}
 =\max\{\|C_G\|_4,\|R_G\|_4\}.
\end{equation}
Then
\begin{equation}\label{eq:gamma-Gamma4}
 \|G\|_{\ga(L^2,S^4)}\asymp \|G\|_{\Gamma_4}.
\end{equation}

\subsection{Two elementary transfer principles}
We record two facts needed later.  The first is the universal $L^1$ restriction principle, which requires no geometry of the coefficient Banach space.

\begin{proposition}\label{prop:L1}
For every Banach space $X$ and compact finite-measure hypersurface $\Sigma$,
\[
 R_\Sigma:L^1(\R^3;X)\to\ga(L^2(\Sigma),X)
\]
is bounded.
\end{proposition}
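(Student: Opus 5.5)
The plan is to reduce to elementary tensors: write $f\in L^1(\R^3;X)$, by density, as a finite sum $f=\sum_k \mathbf 1_{Q_k}\otimes x_k$ of simple functions, and view $R_\Sigma f$ as the operator
\[
 L^2(\Sigma)\ni h\longmapsto \int_\Sigma h(\omega)\,\widehat f(\omega)\,d\sigma(\omega)\in X .
\]
For a single point mass $f=\delta_y\otimes x$, or more honestly for $f=\phi\otimes x$ with $\phi\in L^1(\R^3)$ scalar, the restriction $R_\Sigma f$ is the rank-one operator $h\mapsto \langle h,\overline{\widehat\phi|_\Sigma}\rangle\, x$. A rank-one operator $u\otimes x$ has $\gamma$-norm exactly $\|u\|_{L^2(\Sigma)}\|x\|_X$; this follows from \eqref{eq:gamma-def} with one Gaussian. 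Hence
\[
 \|R_\Sigma(\phi\otimes x)\|_\ga=\|\widehat\phi\|_{L^2(\Sigma)}\|x\|_X\le \sigma(\Sigma)^{1/2}\|\phi\|_{L^1}\|x\|_X ,
\]
using $|\widehat\phi|\le\|\phi\|_1$ and the finite measure of $\Sigma$.

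The key step is then the triangle inequality in the Banach space $\ga(L^2(\Sigma),X)$, applied in a way that avoids tensor-type losses. Writing $f(y)=\int \delta_y\, f(y)\,dy$, I would represent $R_\Sigma f=\int_{\R^3} e_y\otimes f(y)\,dy$ as a Bochner integral in $\ga(L^2(\Sigma),X)$. Here $e_y(\omega)=e^{-iy\cdot\omega}$ and $e_y\otimes f(y)$ denotes the rank-one operator $h\mapsto\langle h,\overline{e_y}\rangle f(y)$. Each integrand has $\gamma$-norm $\|e_y\|_{L^2(\Sigma)}\|f(y)\|_X=\sigma(\Sigma)^{1/2}\|f(y)\|_X$. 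Strong measurability is clear for simple $f$, since $y\mapsto e_y$ is continuous into $L^2(\Sigma)$. The Bochner norm inequality then gives
\[
 \|R_\Sigma f\|_{\ga}\le \sigma(\Sigma)^{1/2}\|f\|_{L^1(\R^3;X)} .
\]
Next I check that this Bochner integral agrees with the restriction operator when tested against $h$ and $x^*\in X^*$. This is Fubini, because the pairing with $h$ and $x^*$ is a bounded functional on $\ga$: the embedding $\ga\hookrightarrow\mathcal L(H,X)$ is contractive.

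Finally I extend by density of simple functions in $L^1(\R^3;X)$ and completeness of $\ga$. The limit coincides with the a.e./continuous restriction of $\widehat f$, since $\widehat f$ is continuous into $X$ and convergence in $\ga$ implies operator-norm convergence.

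The main obstacle is only the measurability/identification of the Bochner integral inside $\ga$ for general $X$. Working first with simple $f$, where the integral is a finite sum, sidesteps this entirely.
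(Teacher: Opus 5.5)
Your proposal is correct and follows the paper's proof essentially verbatim: you write $R_\Sigma f$ as the Bochner integral $\int_{\R^3} e_y\otimes f(y)\,dy$ of rank-one operators, each of $\ga$-norm $\sigma(\Sigma)^{1/2}\|f(y)\|_X$, and you conclude with the triangle inequality in $\ga(L^2(\Sigma),X)$ and density. Your extra checks (identification via Fubini using the contractive embedding $\ga\hookrightarrow\mathcal L(L^2(\Sigma),X)$, and matching the limit with $\widehat f|_\Sigma$) only fill in details the paper leaves implicit; note also that for simple $f$ with disjoint supports the finite triangle inequality already gives the bound, so there are no ``tensor-type losses'' to avoid.
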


\begin{proof}
For $x\in\R^3$, let $k_x(\omega)=e^{-ix\cdot\omega}$.  The restriction operator associated with a simple $X$-valued $f$ is the Bochner integral of rank-one operators
\[
 T_f=\int_{\R^3} k_x\otimes f(x)\,dx.
\]
Since $\|k_x\otimes y\|_\ga=\|k_x\|_{L^2(\Sigma)}\|y\|_X=\sigma(\Sigma)^{1/2}\|y\|_X$, the triangle inequality in $\ga$ yields
\[
 \|T_f\|_\ga\le\sigma(\Sigma)^{1/2}\|f\|_{L^1(X)}.
\]
Density completes the proof.
\end{proof}

The second elementary principle is the Hilbert-output factorization, which transfers a scalar Hilbert-input estimate to Schatten-valued $\gamma$ data on the range $1<a\le2$.

\begin{proposition}\label{prop:factorization}
Let $T:H\to L^q(M)$ be bounded, where $H$ is Hilbert and $2\le q\le\infty$.  For $1<a\le2$,
\[
 T\otimes I_{S^a}:\ga(H,S^a)\to L^q(M;S^a)
\]
is bounded with a constant depending only on $a$ and $\|T\|$.
\end{proposition}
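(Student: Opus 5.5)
Since $1<a\le2$, the space $S^a$ has Gaussian cotype $2$, and the plan is to exploit this. Gaussian cotype $2$ combined with the Gaussian Fubini/Kahane principle should let us dominate the $\gamma(H,S^a)$ norm from below by a square function, and then transfer the scalar bound through the lattice $L^q(M)$.

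Concretely, let $G=\sum_{j=1}^N h_j\otimes x_j$ with $(h_j)$ orthonormal in $H$ and $x_j\in S^a$. Then
\[
 (T\otimes I)G(m)=\sum_j (Th_j)(m)\,x_j ,
\]
and the task is to bound $\|\sum_j (Th_j)(\cdot)x_j\|_{L^q(M;S^a)}$.

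\textbf{Step 1 (Maurey--Kwapie\'n factorization).} By the noncommutative Khintchine description quoted after \eqref{eq:NCK}, for $a\le 2$ the $\gamma$-norm is equivalent to
\[
 \inf_{x_j=y_j+z_j}\Bigl\{\bigl\|(\textstyle\sum_j y_j^*y_j)^{1/2}\bigr\|_a+\bigl\|(\textstyle\sum_j z_jz_j^*)^{1/2}\bigr\|_a\Bigr\}.
\]
By the triangle inequality it suffices to treat a pure column family $(y_j)$; the row case follows by taking adjoints. For a column family, write $y_j=u_jC$ with $C=(\sum_j y_j^*y_j)^{1/2}$ and contractions $u_j$ such that $\sum_j u_j^*u_j\le I$ (polar/Cholesky factorization).

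\textbf{Step 2 (pointwise column estimate).} With this factorization,
\[
 \sum_j (Th_j)(m)\,y_j=V(m)\,C,\qquad V(m)=\sum_j (Th_j)(m)\,u_j .
\]
Schatten H\"older gives $\|V(m)C\|_{S^a}\le \|V(m)\|_{S^\infty}\|C\|_{S^a}$, so what is needed is
\[
 \Bigl\|\,\|V(m)\|_{B(\cH)}\Bigr\|_{L^q(M)}\lesssim \|T\| .
\]
This does not follow pointwise: for unit vectors $\xi,\eta$,
\[
 \langle V(m)\xi,\eta\rangle=\bigl(T h_{\xi,\eta}\bigr)(m),\qquad h_{\xi,\eta}=\sum_j\langle u_j\xi,\eta\rangle h_j,
\]
where $\|h_{\xi,\eta}\|_H\le 1$. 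This controls each matrix coefficient in $L^q$, but not the supremum over $\xi,\eta$.

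\textbf{Step 3 (main obstacle and fallback).} Passing from coefficientwise control to control of the operator norm is the main obstacle. My first attempt is complex interpolation in $a$:

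\emph{Endpoint $a=2$.} Here $\gamma(H,S^2)$ is the Hilbert–Schmidt class. Writing $x_j=\sum_{k,l}x_j^{kl}e_{kl}$ and applying $T$ entrywise, Minkowski's inequality in $L^{q/2}$ (valid since $q\ge2$) gives
\[
 \Bigl\|\bigl(\textstyle\sum_{k,l}|T g_{kl}|^2\bigr)^{1/2}\Bigr\|_{L^q}\le\|T\|\Bigl(\textstyle\sum_{k,l}\|g_{kl}\|_H^2\Bigr)^{1/2},
\]
which is exactly the Hilbert-valued extension of $T$. This settles $a=2$.

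\emph{Endpoint $a\to1$.} The same argument, with Step 2 run separately on the column and row families, should give an estimate at an $S^1$-type endpoint.

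\emph{Interpolation.} Complex interpolation of $\gamma$-spaces, $[\gamma(H,S^{a_0}),\gamma(H,S^{a_1})]_\theta=\gamma(H,S^a)$, which holds because these spaces are $K$-convex, would then yield the full range $1<a<2$.

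If the $S^1$ endpoint fails, the fallback is to use the Gaussian cotype-$2$ bound $\|G\|_{\gamma}\gtrsim(\sum_j\|x_j\|^2)^{1/2}$ only after first reducing, by Step 1, to the column case handled at $a=2$.
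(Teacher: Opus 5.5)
Your Step 1 reduction and your $a=2$ computation (Minkowski in $L^{q/2}$) are correct; they are exactly the two ingredients the paper uses, the latter being its ``Hilbert-valued tensorization''. However, the proposal never passes from $S^2$ to $S^a$ with $a<2$, which is the entire content of the statement.

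\textbf{Why Step 2 fails.} Step 2 cannot be completed, because the bound it asks for, $\|\,\|V(m)\|_{B(\cH)}\|_{L^q}\lesssim\|T\|$, is false. Take $u_j=e_{1j}$ for $1\le j\le N$, so that $\sum_ju_j^*u_j=I_N$; this is realized by the column family $y_j=u_j$ with $C=I_N$. Take $Th=\sum_k\langle h,e_k\rangle g_k$ with i.i.d.\ standard Gaussians on a probability space. Then $\|T\|=\|g_1\|_q$, while $\|V(m)\|=(\sum_j|g_j(m)|^2)^{1/2}$ has $L^q$ norm at least $\sqrt N$.

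\textbf{Consequences for Step 3.} The ``$S^1$-type endpoint by the same argument'' is therefore unsupported. The interpolation is also misjustified: $S^1$ contains $\ell^1$, so it is not $K$-convex, and $K$-convexity cannot be invoked for $[\ga(H,S^1),\ga(H,S^2)]_\theta=\ga(H,S^a)$.

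\textbf{Why the fallback fails.} The cotype fallback cannot close either, because the lower bound $\|G\|_\ga\gtrsim(\sum_j\|x_j\|_a^2)^{1/2}$ is too weak. For $x_j=e_{jj}$ and the Gaussian $T$ above, $\|\sum_j g_je_{jj}\|_{L^q(S^a)}\ge cN^{1/a}$, which is much larger than $\sqrt N$ when $a<2$.

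\textbf{The missing idea.} Split the square function between the two factors, so that the left factor is exactly Hilbert--Schmidt, rather than placing all of it on the right against an $S^\infty$ factor. Let $A=(\sum_jx_j^*x_j)^{1/2}$ and define $r$ by $1/a=1/2+1/r$. On the support of $A$, set $y_j=x_jA^{a/2-1}$ and $B=A^{1-a/2}$. Then $x_j=y_jB$, and
\[
 \sum_j\|y_j\|_{S^2}^2=\Tr A^a, \qquad \|B\|_{S^r}=(\Tr A^a)^{1/r}.
\]
Schatten H\"older pointwise in $m$, followed by your $a=2$ estimate, gives
\[
 \Big\|\sum_j(Th_j)x_j\Big\|_{L^q(S^a)}\le\|B\|_{S^r}\Big\|\sum_j(Th_j)y_j\Big\|_{L^q(S^2)}\le\|T\|(\Tr A^a)^{1/2+1/r}=\|T\|\,\|A\|_{S^a}.
\]
Taking adjoints handles the row case, and the infimum over row/column decompositions finishes the proof. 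No interpolation is needed. At $a=1$ (so $r=2$), this same splitting is what would have proved your $S^1$ endpoint.
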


\begin{proof}
It suffices to treat a finite orthonormal family $(h_j)$ and coefficients $x_j$.  Put
\[
 A=(\sum_jx_j^*x_j)^{1/2}.
\]
Choose $r$ by $1/a=1/2+1/r$.  On the support of $A$, write
\[
 B=A^{1-a/2},\qquad y_j=x_jA^{a/2-1},\qquad x_j=y_jB.
\]
Spectral regularization removes the possible kernel.  Then
\[
 \sum_j\|y_j\|_2^2=\Tr A^a,
 \qquad
 \|B\|_r=(\Tr A^a)^{1/r}.
\]
Hilbert-valued tensorization of $T$ and Schatten H\"older imply
\[
 \left\|\sum_j(Th_j)x_j\right\|_{L^q(S^a)}
 \le \|T\|\,\|A\|_a.
\]
A left factorization gives the row estimate.  Taking the infimum over row/column decompositions and using the $a\le2$ form of noncommutative Khintchine proves the claim.
\end{proof}

By duality, Proposition~\ref{prop:factorization} implies the full scalar Stein--Tomas range for restriction with values in $S^a$ when $a\ge2$.

\section{Local quartic resonance geometry}\label{sec:geometry}

The proof of Theorem~\ref{thm:main-extension} is local on the surface.  Let $U\Subset\R^2$ be convex and
\[
 \Gamma(\xi)=(\xi,\phi(\xi)),
\]
where $\phi\in C^2(U)$ and $D^2\phi$ is uniformly nonsingular.  After shrinking $U$ around a fixed point, we may assume that $D^2\phi$ stays uniformly close to one invertible matrix.  We initially work with Lebesgue measure $d\xi$; smooth positive surface densities are inserted in Section~\ref{sec:global}.

For $G:U\to S^4$, extended by zero outside $U$, define
\begin{equation}\label{eq:local-extension}
 E_\phi G(x,t)
 =\int_{\R^2} e^{i(x\cdot\xi+t\phi(\xi))}G(\xi)\,d\xi.
\end{equation}
We first make rigorous the Fourier-side object obtained by squaring this extension.

For $u\in\R^2$ put
\begin{equation}\label{eq:qu}
 q_u(\xi)=\phi(\xi+u)-\phi(\xi).
\end{equation}
Whenever both $\xi$ and $\xi+u$ belong to $U$,
\begin{equation}\label{eq:gradqu}
 \nabla_\xi q_u(\xi)
 =\left(\int_0^1D^2\phi(\xi+tu)\,dt\right)u.
\end{equation}

The next result is the difference-map transversality lemma.  It turns nondegeneracy of the Hessian into a uniform coarea description of each difference fiber.

\begin{lemma}\label{lem:difference-transversality}
After shrinking $U$ if necessary, there is $c_0>0$ such that
\begin{equation}\label{eq:gradqu-lower}
 |\nabla_\xi q_u(\xi)|\ge c_0|u|
\end{equation}
for every admissible $(\xi,u)$.  More precisely, for $u\ne0$ one may choose
\[
 e_u=\frac{H_0u}{|H_0u|},\qquad H_0=D^2\phi(\xi_0),
\]
so that
\begin{equation}\label{eq:directional-qu}
 |e_u\cdot\nabla_\xi q_u(\xi)|\ge c_0|u|
\end{equation}
with one fixed sign throughout the patch.  Consequently every nonempty level set of $q_u$ is, in coordinates adapted to $e_u$, a single uniformly Lipschitz graph; in particular its length inside the fixed compact support is bounded uniformly after $|u|$ is bounded away from zero.
\end{lemma}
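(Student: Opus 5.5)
The plan rests on formula \eqref{eq:gradqu} and a perturbation of the fixed Hessian $H_0=D^2\phi(\xi_0)$. Set $M(\xi,u)=\int_0^1D^2\phi(\xi+tu)\,dt$. Since $\phi\in C^2$ and $U$ is convex, the segment from $\xi$ to $\xi+u$ stays in $U$ for admissible $(\xi,u)$. Uniform continuity of $D^2\phi$ on a compact neighbourhood of $\xi_0$ then lets us shrink $U$ so that $\|D^2\phi(\eta)-H_0\|\le\delta$ for all $\eta\in U$, where $\delta$ is a small multiple of $\|H_0^{-1}\|^{-1}$. Averaging gives $\|M(\xi,u)-H_0\|\le\delta$.

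For $u\ne0$ define $e_u=H_0u/|H_0u|$. Then
\[
 e_u\cdot\nabla_\xi q_u(\xi)=e_u\cdot H_0u+e_u\cdot(M-H_0)u\ge |H_0u|-\delta|u|\ge(\|H_0^{-1}\|^{-1}-\delta)|u|.
\]
Taking $\delta=\tfrac12\|H_0^{-1}\|^{-1}$ gives \eqref{eq:directional-qu} with $c_0=\tfrac12\|H_0^{-1}\|^{-1}$. The sign is positive for every $\xi$ in the patch, and this does not depend on the signature of $H_0$: the point is that we pair against $H_0u$ rather than against $u$, which is exactly why hyperbolic curvature is allowed. Cauchy--Schwarz then yields \eqref{eq:gradqu-lower}.

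For the level sets, fix $u\ne0$ and rotate coordinates so that $e_u$ is the second axis, writing $\xi=(s,\tau)$. The set of $\xi$ with $\xi,\xi+u\in U$ is $U\cap(U-u)$, which is convex. On it, $\partial_\tau q_u\ge c_0|u|>0$, so each line parallel to $e_u$ meets it in an interval where $q_u$ is strictly increasing. Hence $q_u$ takes each value $c$ at most once on each such line. By the implicit function theorem, the level set $\{q_u=c\}$ is the graph $\tau=g(s)$ over an interval of $s$ (the projection of a connected level set). Its slope is
\[
 g'(s)=-\frac{\partial_sq_u}{\partial_\tau q_u},
\]
so $|g'|\le \|M\||u|/(c_0|u|)\le(\|H_0\|+\delta)/c_0$, a uniform Lipschitz bound independent of $u$ and $c$.

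Two points still need care. First, connectedness of the projected domain: I would argue that the set of $s$ where the graph is defined is an interval. The monotonicity in $\tau$ on convex slices, together with continuity, should give this. If that fails, I would instead define the graph on its domain and note that each component is still Lipschitz with the same bound, which is all the coarea use requires. Second, the length bound: the length is at most $\sqrt{1+L^2}\cdot\operatorname{diam}(U)$, which is uniform, so the $|u|$ restriction plays no role for length. It enters only through the coarea denominator $|\nabla q_u|\ge c_0|u|$ used later. I expect the connectedness and single-graph claim to be the only delicate step; everything else is a direct perturbation estimate.
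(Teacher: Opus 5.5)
Your proposal is correct and is essentially the paper's argument: you perturb the averaged Hessian off $H_0$, pair with $e_u=H_0u/|H_0u|$ to get a fixed-sign directional derivative bounded below by a multiple of $|u|$, and then use strict monotonicity along lines parallel to $e_u$, the implicit function theorem and the upper Hessian bound for the uniform graph and slope bounds. Your remark that the length bound needs no lower bound on $|u|$ is also right.

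On the point you flag, your primary sketch does not show that the projected domain is an interval, and for merely $C^2$ $\phi$ it need not be one. Your fallback, however, is exactly what ``single graph'' requires. Each line parallel to $e_u$ meets the level set at most once. Integrating $\nabla_\xi q_u$ along the segment joining two level-set points inside the convex set $U\cap(U-u)$ gives the same Lipschitz bound across components. So the total length is at most $\sqrt{1+L^2}\,\operatorname{diam}U$.
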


\begin{proof}
Fix $\xi_0\in U$ and write $H_0=D^2\phi(\xi_0)$.  If $s_0$ is the smallest singular value of $H_0$, shrink the relevant convex neighborhood until
\[
 \|D^2\phi(\zeta)-H_0\|\le s_0/4.
\]
Then the averaged Hessian in \eqref{eq:gradqu} differs from $H_0$ by at most $s_0/4$, which proves \eqref{eq:gradqu-lower}.  Pairing the same identity with $e_u=H_0u/|H_0u|$ gives \eqref{eq:directional-qu}, after a harmless further shrinking.  Since the directional derivative never changes sign, $q_u$ is strictly monotone along every line parallel to $e_u$; the implicit-function theorem and the upper Hessian bound give the asserted uniform graph and slope bounds.
\end{proof}

For $\varepsilon>0$ define the $S^2$-valued difference density
\begin{equation}\label{eq:Keps}
 K_\varepsilon(u,s)
 =\mathbf 1_{\{|u|\ge\varepsilon\}}
 \int_{\{q_u(\xi)=s\}}
 \frac{G(\xi)^*G(\xi+u)}{|\nabla_\xi q_u(\xi)|}
 \,d\cH^1(\xi).
\end{equation}
For each fixed $\varepsilon>0$, Lemma~\ref{lem:difference-transversality} gives a uniformly Lipschitz description of the $q_u$-fibers for $|u|\ge\varepsilon$.  Since $G$ is bounded and compactly supported, \eqref{eq:Keps} is then bounded on a compact $(u,s)$-set, and hence $K_\varepsilon\in L^2(\R^3;S^2)$.  It is the coarea density of the pushforward of
\[
 \mathbf 1_{\{|\eta-\xi|\ge\varepsilon\}}
 G(\xi)^*G(\eta)\,d\xi\,d\eta
\]
under the difference map
\[
 (\xi,\eta)\longmapsto \Gamma(\eta)-\Gamma(\xi).
\]
Consequently, up to the harmless Fourier-normalization constant,
\begin{equation}\label{eq:Keps-fourier}
 K_\varepsilon
 \longrightarrow
 \cF_{x,t}\big((E_\phi G)^*E_\phi G\big)
\end{equation}
in the sense of tempered distributions as $\varepsilon\downarrow0$.  In fact, for smooth compactly supported finite-rank $G$ the omitted region $|\eta-\xi|<\varepsilon$ has $S^2$-valued total variation tending to zero, so the underlying pushforward measures converge in total variation.

The local theorem will follow once we prove the uniform estimate
\begin{equation}\label{eq:Kuniform}
 \sup_{\varepsilon>0}
 \|K_\varepsilon\|_{L^2(\R^3;S^2)}^2
 \lesssim_\phi
 \left\|\left(\int_U G^*G\right)^{1/2}\right\|_4^4
 +
 \left\|\left(\int_U GG^*\right)^{1/2}\right\|_4^4.
\end{equation}
To identify the geometry in this $L^2$ norm, expand \eqref{eq:Keps} and write the second point on the same $q_u$-fiber as $\xi+v$.  Equality of the two $q_u$ values is precisely
\begin{equation}\label{eq:Phi}
 \Phi_{\xi,v}(u)
 =\phi(\xi+u)-\phi(\xi)-\phi(\xi+u+v)+\phi(\xi+v)=0.
\end{equation}
The four corresponding frequencies are
\begin{equation}\label{eq:uv-param}
 \xi_1=\xi,
 \qquad
 \xi_2=\xi+u,
 \qquad
 \xi_4=\xi+v,
 \qquad
 \xi_3=\xi+u+v.
\end{equation}
To make the re-disintegration precise, fix $\delta>0$ and work first on $|u|,|v|\ge\delta$.  On this regular region the scalar coarea theorem and Fubini define one resonance measure $d\nu_\xi(u,v)$ on $\{\Phi_{\xi,v}(u)=0\}$, and it has the two equivalent iterated representations
\begin{align}
 \int F\,d\nu_\xi
 &=\int_{|u|\ge\delta}\int_{\{\Phi_{\xi,v}(u)=0\}}
   \frac{F(u,v)}{|\nabla_v\Phi_{\xi,v}(u)|}\,d\cH^1(v)\,du \label{eq:res-measure-v}\\
 &=\int_{|v|\ge\delta}\int_{\{\Phi_{\xi,v}(u)=0\}}
   \frac{F(u,v)}{|\nabla_u\Phi_{\xi,v}(u)|}\,d\cH^1(u)\,dv . \label{eq:res-measure-u}
\end{align}
Indeed both sides are the coarea disintegration of the same hypersurface measure for the scalar level set $\Phi=0$; equivalently, this follows by applying coarea in one variable and then Fubini, with all gradients bounded away from zero on the truncated region.  This observation is what permits us below to choose the $u$- or $v$-fiber according to which chord is longer.

If
\[
 A_\xi(u)=G(\xi)^*G(\xi+u),
\]
then the pointwise trace in the expansion of $\|K_\varepsilon\|_2^2$ is
\[
 \Tr\!\big(A_\xi(u)^*A_{\xi+v}(u)\big)
 =\Tr(G_2^*G_1G_4^*G_3).
\]
This is the complex conjugate, after cyclic permutation, of
\[
 \Tr(G_1^*G_2G_3^*G_4).
\]
Hence the two expressions have exactly the same absolute value.  Thus it suffices to control the absolute quartic form on the resonance set \eqref{eq:Phi}.  The estimates are uniform in $\delta$, so Remark~\ref{rem:singular-axes} removes the auxiliary truncation.

The next lemma is the resonance-fiber transversality statement.  It is the four-wave analogue of the preceding difference-map estimate.

\begin{lemma}\label{lem:transversality}
After shrinking $U$ if necessary, there is $c_1>0$ such that for all admissible $\xi,u,v$,
\begin{equation}\label{eq:gradPhi}
 |\nabla_u\Phi_{\xi,v}(u)|\ge c_1|v|.
\end{equation}
More precisely, for every $v\ne0$ one can choose a unit vector $e_v$, depending only on $v$ and the center Hessian, such that
\begin{equation}\label{eq:directionalPhi}
 |e_v\cdot\nabla_u\Phi_{\xi,v}(u)|\ge c_1|v|.
\end{equation}
The sign of this directional derivative is fixed on the patch.
\end{lemma}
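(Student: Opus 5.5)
Compute the gradient directly from \eqref{eq:Phi}:
\[
 \nabla_u\Phi_{\xi,v}(u)=\nabla\phi(\xi+u)-\nabla\phi(\xi+u+v)
 =-\left(\int_0^1D^2\phi(\xi+u+tv)\,dt\right)v .
\]
This is the same averaged-Hessian structure as \eqref{eq:gradqu}, now with the chord $v$ in place of $u$ and base point $\xi+u$. The plan is therefore to repeat the proof of Lemma~\ref{lem:difference-transversality} essentially verbatim. The only new point is that the base point $\xi+u$ and the segment $[\xi+u,\xi+u+v]$ must lie in the convex patch. This holds for admissible $(\xi,u,v)$, since $\xi+u$ and $\xi+u+v$ both lie in $U$ and $U$ is convex.

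First, fix $\xi_0\in U$ and set $H_0=D^2\phi(\xi_0)$, with smallest singular value $s_0>0$. Shrink $U$ so that $\|D^2\phi(\zeta)-H_0\|\le s_0/4$ on $U$; this is the same shrinking used in Lemma~\ref{lem:difference-transversality}, so no further shrinking is needed. Write $M=\int_0^1D^2\phi(\xi+u+tv)\,dt$. Then $\|M-H_0\|\le s_0/4$, and hence
\[
 |\nabla_u\Phi|=|Mv|\ge |H_0v|-\tfrac{s_0}{4}|v|\ge \tfrac{3s_0}{4}|v|,
\]
which gives \eqref{eq:gradPhi} with $c_1=3s_0/4$.

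For the directional version, take $e_v=-H_0v/|H_0v|$, which depends only on $v$ and the center Hessian. Then
\[
 e_v\cdot\nabla_u\Phi=\frac{\langle H_0v,Mv\rangle}{|H_0v|}
 =|H_0v|+\frac{\langle H_0v,(M-H_0)v\rangle}{|H_0v|}.
\]
The first term is at least $s_0|v|$, and the error term is at most $\tfrac{s_0}{4}|v|$. So the directional derivative is at least $\tfrac34 s_0|v|>0$, which gives \eqref{eq:directionalPhi} with a fixed positive sign on the whole patch.

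No step here is a genuine obstacle. The only care needed is to define admissibility so that all four points \eqref{eq:uv-param} lie in $U$. Convexity then places every intermediate point $\xi+u+tv$ in $U$, where the Hessian perturbation bound holds. For completeness, the symmetric bound $|\nabla_v\Phi|\ge c_1|u|$, used in \eqref{eq:res-measure-v}, follows the same way from $\nabla_v\Phi=\nabla\phi(\xi+v)-\nabla\phi(\xi+u+v)$.
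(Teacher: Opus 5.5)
Your proof is correct and follows the paper's argument. It uses the same averaged-Hessian formula for $\nabla_u\Phi_{\xi,v}$, the same perturbation bound $\|D^2\phi-H_0\|\le s_0/4$ on the convex patch, and the direction $\pm H_0v/|H_0v|$; your minus sign only makes the fixed sign positive, and your explicit estimate $e_v\cdot\nabla_u\Phi\ge\tfrac34 s_0|v|$ shows that the paper's ``shrinking once more'' is not actually needed.
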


\begin{proof}
With $H_0$ and $s_0$ as above, the fundamental theorem of calculus gives
\begin{equation}\label{eq:grad-average}
 \nabla_u\Phi_{\xi,v}(u)
 =-\left(\int_0^1D^2\phi(\xi+u+tv)\,dt\right)v.
\end{equation}
The averaged Hessian stays within $s_0/4$ of $H_0$, proving \eqref{eq:gradPhi}.  Taking
\[
 e_v=\frac{H_0v}{|H_0v|}
\]
and shrinking once more makes $e_v\cdot\nabla_u\Phi$ bounded away from zero with one fixed sign, proving \eqref{eq:directionalPhi}.
\end{proof}

The scale-critical geometric input is the short-chord fiber bound: the short chord controls the length of the resonance fiber, while the long chord controls the coarea denominator.

\begin{lemma}\label{lem:fiber-gain}
There is $C$ such that
\begin{equation}\label{eq:fiber-gain}
 \sup_{\xi,\,v\ne0}
 \int_{\substack{|u|\le |v|\\\Phi_{\xi,v}(u)=0}}
 \frac{d\cH^1(u)}{|\nabla_u\Phi_{\xi,v}(u)|}
 \le C.
\end{equation}
The symmetric estimate with $u$ and $v$ interchanged holds on $|v|<|u|$.
\end{lemma}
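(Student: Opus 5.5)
The proof combines Lemma~\ref{lem:transversality}, which gives a lower bound for the denominator, with the graph structure of the fiber, which gives an upper bound for its length. Fix $\xi$ and $v\ne0$. By \eqref{eq:gradPhi}, on the fiber $\{\Phi_{\xi,v}(u)=0\}$ we have $|\nabla_u\Phi_{\xi,v}(u)|\ge c_1|v|$. The integrand is therefore at most $(c_1|v|)^{-1}$, and it suffices to show
\[
 \cH^1\big(\{u:|u|\le|v|,\ \Phi_{\xi,v}(u)=0\}\big)\le C'|v|
\]
uniformly in $\xi$ and $v$. The two factors of $|v|$ then cancel, and the bound is $C'/c_1$.

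For the length bound we use the directional statement \eqref{eq:directionalPhi}. Choose coordinates $u=se_v+\tau e_v^\perp$. Since $e_v\cdot\nabla_u\Phi_{\xi,v}$ has fixed sign and modulus at least $c_1|v|$, the function $s\mapsto\Phi_{\xi,v}(se_v+\tau e_v^\perp)$ is strictly monotone on each line where $u$ is admissible; convexity of $U$ makes each such admissible set a segment. So on each such line the zero set contains at most one point. By the implicit function theorem the fiber is a graph $s=g(\tau)$ over a subset of the $\tau$-axis, with slope
\[
 |g'(\tau)|=\frac{|e_v^\perp\cdot\nabla_u\Phi|}{|e_v\cdot\nabla_u\Phi|}\le\frac{C_2|v|}{c_1|v|}.
\]
Here the upper bound $|\nabla_u\Phi|\le C_2|v|$ follows from \eqref{eq:grad-average} and the upper Hessian bound. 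The slope is thus bounded independently of $v$. The constraint $|u|\le|v|$ forces $|\tau|\le|v|$, so the portion of the graph in the disc of radius $|v|$ has length at most $2|v|\sqrt{1+(C_2/c_1)^2}$. Combining this with the previous paragraph proves \eqref{eq:fiber-gain}.

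The main obstacle is the step where the fiber is treated as a single graph. The admissible region for $u$ (namely $\xi+u$, $\xi+v$, $\xi+u+v\in U$) is convex, being an intersection of translates of the convex set $U$, so its intersection with each line parallel to $e_v$ is an interval. Monotonicity of $\Phi_{\xi,v}$ along that interval then rules out multiple components over the same $\tau$. One subtlety is that the graph's domain of $\tau$ may be disconnected. This does not matter, because the length is computed as $\int\sqrt{1+g'^2}\,d\tau$ over a subset of $[-|v|,|v|]$.

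The symmetric estimate on $|v|<|u|$ follows by the same argument with the roles of $u$ and $v$ exchanged. By the symmetry of \eqref{eq:Phi} in $u$ and $v$, one has $\nabla_v\Phi=-\big(\int_0^1D^2\phi(\xi+v+tu)\,dt\big)u$, which gives a lower bound $c_1|u|$ and a fixed-sign direction $e_u=H_0u/|H_0u|$.
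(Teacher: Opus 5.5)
Your proof is correct and follows the paper's argument. It uses the lower bound $|\nabla_u\Phi_{\xi,v}(u)|\ge c_1|v|$ from Lemma~\ref{lem:transversality}, the fixed-sign direction $e_v$ that makes the fiber a uniformly Lipschitz graph over $e_v^\perp$ whose projection from $\{|u|\le|v|\}$ has length at most $2|v|$, and the symmetry $\Phi_{\xi,v}(u)=\Phi_{\xi,u}(v)$ for the region $|v|<|u|$. Your convexity argument, that each line parallel to $e_v$ meets the fiber at most once, fills in a detail the paper leaves implicit; and since in graph coordinates $d\cH^1/|\nabla_u\Phi_{\xi,v}|=d\tau/|e_v\cdot\nabla_u\Phi_{\xi,v}|$, the slope bound is not even strictly needed.
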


\begin{proof}
By Lemma~\ref{lem:transversality}, in coordinates whose first axis is $e_v$, the zero set of $\Phi_{\xi,v}$ is a uniformly Lipschitz graph over the orthogonal coordinate.  Its projection from the region $|u|\le |v|$ has length at most $2|v|$, while the directional derivative, and hence the full gradient, is bounded below by $c_1|v|$.  Coarea in this graph coordinate therefore gives a uniform bound.  The symmetric statement follows from
\[
 \Phi_{\xi,v}(u)=\Phi_{\xi,u}(v).
\]
\end{proof}

\begin{remark}[The quadratic model]\label{rem:quadratic-model}
If $\phi(\xi)=\frac12\xi^TH\xi$ with $H$ symmetric and invertible, then the resonance identity is exact:
\[
 \Phi_{\xi,v}(u)=-u^THv.
\]
Thus, for fixed $v$, the resonant $u$-fiber is a line perpendicular to $Hv$; its intersection with $|u|\le |v|$ has length $O(|v|)$ and the coarea denominator is $|Hv|\simeq_H |v|$.  Lemma~\ref{lem:fiber-gain} is therefore immediate in the quadratic case.  The preceding proof shows that on a sufficiently small curved patch the general $C^2$ geometry is a uniform perturbation of exactly this model.
\end{remark}

\begin{remark}[Degenerate axes]\label{rem:singular-axes}
The coarea representation of the expanded $L^2$ norm degenerates at $v=0$, and symmetrically at $u=0$.  These axes have zero measure in the product of the regular fibers.  Equivalently, one may first truncate $|u|,|v|\ge\delta$, use the two-region estimates below uniformly in $\delta$, and then pass to the limit.  The direct short-chord fiber bound is uniform down to zero; in particular the apparent $1/|v|$ singularity is locally integrable in the two-dimensional $v$ variable.
\end{remark}

\section{The two-chord proof of the local \texorpdfstring{$S^4$}{S4} extension theorem}\label{sec:localproof}

The next result is the local graph theorem.  It is the core analytic statement of the paper: once it is proved on a sufficiently small graph patch, the compact-surface theorem follows by a finite atlas and the ideal property of $\gamma$.

\begin{theorem}\label{thm:local}
Under the assumptions of Section~\ref{sec:geometry},
\begin{equation}\label{eq:local-main}
 \|E_\phi G\|_{L^4(\R^3;S^4)}
 \lesssim_\phi
 \max\left\{
 \left\|(\int_U G^*G)^{1/2}\right\|_4,
 \left\|(\int_U GG^*)^{1/2}\right\|_4
 \right\}.
\end{equation}
The estimate is uniform over finite matrix dimensions.
\end{theorem}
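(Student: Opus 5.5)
The plan is to convert the quartic $L^4(S^4)$ norm into a Hilbert--Schmidt $L^2$ norm and then bound the resulting quartic form on the resonance set directly. For $G$ finite-rank, smooth and compactly supported, we have
\[
 \|E_\phi G\|_{L^4(S^4)}^4=\int\Tr\big(((E_\phi G)^*E_\phi G)^2\big)\,dx\,dt=\|(E_\phi G)^*E_\phi G\|_{L^2(\R^3;S^2)}^2 .
\]
Because $S^2$ is Hilbert, vector-valued Plancherel applies. Combined with \eqref{eq:Keps-fourier} and Fatou (or weak lower semicontinuity in $L^2(S^2)$), this reduces Theorem~\ref{thm:local} to the uniform bound \eqref{eq:Kuniform}. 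Density then handles general $G$.

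To prove \eqref{eq:Kuniform}, I will expand $\|K_\varepsilon\|_2^2$ and truncate to $|u|,|v|\ge\delta$. As computed after \eqref{eq:uv-param}, the expansion is bounded by
\[
 \int_U\int|\Tr(G_1^*G_2G_3^*G_4)|\,d\nu_\xi(u,v)\,d\xi .
\]
I then split the domain into the regions $\{|u|\le|v|\}$ and $\{|v|<|u|\}$. On each region I use the representation of $\nu_\xi$ that integrates over the fiber of the shorter chord: \eqref{eq:res-measure-u} on the first region and \eqref{eq:res-measure-v} on the second. Since these are disintegrations of one measure, the choice is legitimate.

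On $\{|u|\le|v|\}$ the long chord is $v$, which joins $\xi_1$ to $\xi_4$ and $\xi_2$ to $\xi_3$. By cyclicity,
\[
 \Tr(G_1^*G_2G_3^*G_4)=\langle G_1G_4^*,G_2G_3^*\rangle_{HS},
\]
so Cauchy--Schwarz bounds the quartic form by $\tfrac12\big(\|G_1G_4^*\|_2^2+\|G_2G_3^*\|_2^2\big)$. By \eqref{eq:S4HS}, $\|G_1G_4^*\|_2^2=\Tr(G(\xi)^*G(\xi)\,G(\xi+v)^*G(\xi+v))$, which does not depend on $u$. Integrating over the $u$-fiber with $|u|\le|v|$, Lemma~\ref{lem:fiber-gain} gives a factor of at most $C$. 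What remains is
\[
 \int\!\!\int\Tr\big(G(\xi)^*G(\xi)\,G(\eta)^*G(\eta)\big)\,d\xi\,d\eta=\Tr(C_G^4)=\|C_G\|_4^4 .
\]
For the term $\|G_2G_3^*\|_2^2$ I substitute $\xi'=\xi+u$ and $u'=-u$. The identity $\Phi_{\xi+u,v}(-u)=-\Phi_{\xi,v}(u)$ shows that this map preserves the resonance set, the condition $|u|\le|v|$, and the coarea weights. That term therefore reduces to the previous one.

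On $\{|v|<|u|\}$ the long chord is $u$, and the roles are symmetric. Here I pair
\[
 \Tr(G_1^*G_2G_3^*G_4)=\langle G_2^*G_1,G_3^*G_4\rangle_{HS},
\]
and note that $\|G_2^*G_1\|_2^2=\Tr(G_1G_1^*\,G_2G_2^*)$. The $v$-fiber is integrated using the symmetric half of Lemma~\ref{lem:fiber-gain}, via $\Phi_{\xi,v}(u)=\Phi_{\xi,u}(v)$. This yields $\Tr(R_G^4)$, and the other term is handled by the analogous translation symmetry. Summing the two regions gives \eqref{eq:Kuniform} with constants depending only on $c_1$ and the Lipschitz bounds. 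These are independent of $\delta$, $\varepsilon$ and matrix dimension, so letting $\delta\to0$ (Remark~\ref{rem:singular-axes}) completes the argument.

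The main obstacle is the bookkeeping in the expansion. One must check that $\|K_\varepsilon\|_2^2$ is exactly the integral of the trace form against $\nu_\xi\,d\xi$, with the $\varepsilon$-truncation only shrinking the domain of a nonnegative absolute integrand. One must also check that the translation substitutions respect the constraint that all four points lie in $U$. I would settle these first in the quadratic model of Remark~\ref{rem:quadratic-model}, where $\nu_\xi$ is explicit, and then transfer to general $\phi$ using Lemmas~\ref{lem:difference-transversality} and~\ref{lem:transversality}.
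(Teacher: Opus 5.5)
Your proposal is correct and follows the paper's proof essentially step for step. The shared steps are the reduction to the uniform bound \eqref{eq:Kuniform}, the split into $|u|\le|v|$ and $|v|<|u|$ with the short-chord fiber representation on each region, the column pairing $\langle G_1G_4^*,G_2G_3^*\rangle$ and the row pairing along the long chord, the translation symmetry $\Phi_{\xi+u,v}(-u)=-\Phi_{\xi,v}(u)$ for the second factor, and Lemma~\ref{lem:fiber-gain}; your use of $ab\le\frac12(a^2+b^2)$ in place of Cauchy--Schwarz is only a cosmetic variant.
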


\begin{proof}
It suffices to treat smooth finite-rank $G$ compactly supported in $U$.  By the discussion preceding \eqref{eq:Kuniform}, it is enough to prove \eqref{eq:Kuniform}.  Expanding $\|K_\varepsilon\|_{L^2(S^2)}^2$, applying the coarea formula off the degenerate axes, and then inserting absolute values gives the resonant quartic integral associated with \eqref{eq:Phi}.  Split this integral into the two regions
\[
 |u|\le |v|
 \qquad\text{and}\qquad
 |v|<|u|.
\]
The restriction $|u|\ge\varepsilon$ only removes part of these regions, so all estimates below are uniform in $\varepsilon$.

\smallskip
\noindent\emph{Case 1: $|u|\le |v|$.}
Using cyclicity of the trace,
\begin{equation}\label{eq:column-pairing}
 \Tr(G_1^*G_2G_3^*G_4)
 =\langle G_1G_4^*,G_2G_3^*\rangle_{S^2}.
\end{equation}
Hence
\[
 |\Tr(G_1^*G_2G_3^*G_4)|
 \le
 \|G_1G_4^*\|_2\,\|G_2G_3^*\|_2.
\]
In the parametrization \eqref{eq:uv-param}, define
\[
 F_v(\xi)=\|G(\xi)G(\xi+v)^*\|_2.
\]
The two factors are $F_v(\xi)$ and $F_v(\xi+u)$.  The resonant measure is invariant under $(\xi,u)\mapsto(\xi+u,-u)$ because
\[
 \Phi_{\xi+u,v}(-u)=-\Phi_{\xi,v}(u).
\]
Cauchy--Schwarz, the symmetry above, and Lemma~\ref{lem:fiber-gain} therefore give
\begin{align}
 T_{\le}
 &\lesssim
 \int_{\R^2}\int_U
 \|G(\xi)G(\xi+v)^*\|_2^2
 \left[
 \int_{\substack{|u|\le|v|\\\Phi_{\xi,v}(u)=0}}
 \frac{d\cH^1(u)}{|\nabla_u\Phi_{\xi,v}(u)|}
 \right]d\xi\,dv\notag\\
 &\lesssim
 \iint_{U\times U}\|G(\xi)G(\eta)^*\|_2^2\,d\xi\,d\eta\notag\\
 &=\Tr\left(\int_U G^*G\,d\xi\right)^2
 =\left\|\left(\int_U G^*G\right)^{1/2}\right\|_4^4.
 \label{eq:column-energy}
\end{align}

\smallskip
\noindent\emph{Case 2: $|v|<|u|$.}
Use instead
\begin{equation}\label{eq:row-pairing}
 |\Tr(G_1^*G_2G_3^*G_4)|
 \le
 \|G_1^*G_2\|_2\,\|G_3^*G_4\|_2.
\end{equation}
For fixed $u$ put
\[
 H_u(\xi)=\|G(\xi)^*G(\xi+u)\|_2.
\]
The second factor is $H_u(\xi+v)$.  The symmetry $(\xi,v)\mapsto(\xi+v,-v)$, the symmetric part of Lemma~\ref{lem:fiber-gain}, and Cauchy--Schwarz give
\begin{align}
 T_{>}
 &\lesssim
 \int_{\R^2}\int_U
 \|G(\xi)^*G(\xi+u)\|_2^2
 \left[
 \int_{\substack{|v|<|u|\\\Phi_{\xi,v}(u)=0}}
 \frac{d\cH^1(v)}{|\nabla_v\Phi_{\xi,v}(u)|}
 \right]d\xi\,du\notag\\
 &\lesssim
 \iint_{U\times U}\|G(\xi)^*G(\eta)\|_2^2\,d\xi\,d\eta\notag\\
 &=\Tr\left(\int_U GG^*\,d\xi\right)^2
 =\left\|\left(\int_U GG^*\right)^{1/2}\right\|_4^4.
 \label{eq:row-energy}
\end{align}
Combining \eqref{eq:column-energy} and \eqref{eq:row-energy} proves \eqref{eq:Kuniform} uniformly in $\varepsilon$.

Choose $\varepsilon_k\downarrow0$.  The uniform $L^2(\R^3;S^2)$ bound and Hilbert-space weak compactness give a weakly convergent subsequence $K_{\varepsilon_k}\rightharpoonup K$.  By \eqref{eq:Keps-fourier}, the same sequence converges distributionally to the Fourier transform of $(E_\phi G)^*E_\phi G$, so these distributions coincide.  By weak lower semicontinuity, the limit satisfies the same $L^2$ bound.  Plancherel now yields
\[
 \|(E_\phi G)^*E_\phi G\|_{L^2(S^2)}^2
 \lesssim
 \left\|\left(\int_U G^*G\right)^{1/2}\right\|_4^4
 +
 \left\|\left(\int_U GG^*\right)^{1/2}\right\|_4^4.
\]
Finally,
\[
 \|E_\phi G\|_{L^4(S^4)}^4
 =\|(E_\phi G)^*E_\phi G\|_{L^2(S^2)}^2,
\]
which proves \eqref{eq:local-main}.  Density gives the stated completion.
\end{proof}

\begin{remark}[Why the crossing obstruction disappears]\label{rem:crossing}
If one globally fixes the first pairing in \eqref{eq:row-pairing}, one is naturally led to a reordered quartic term plus a noncommutative commutator.  Theorem~\ref{thm:local} does not show that this commutator vanishes.  It shows that estimating it separately is unnecessary.  The operator ordering is chosen locally according to whether $|u|\le|v|$ or $|v|<|u|$.  This is the noncommutative analogue of allowing the geometry to choose the favorable bilinear factorization; no dyadic decomposition is needed once the two regions are separated.
\end{remark}

\section{Global curved surfaces}\label{sec:global}

\begin{proof}[Proof of Theorem~\ref{thm:main-extension}]
Cover $\Sigma$ by finitely many coordinate patches $\Sigma_j$ on which, after a rigid rotation, the surface is the graph of a $C^2$ function $\phi_j$ whose Hessian is uniformly nonsingular.  The nonvanishing Gaussian-curvature assumption guarantees this after shrinking the patches.  Choose a smooth partition of unity $\sum_j\chi_j=1$ subordinate to this cover, with $\supp\chi_j\Subset\Sigma_j$.

In graph coordinates the surface measure has the form
\[
 d\sigma=w_j(\xi)\,d\xi
\]
with $w_j$ positive and bounded above and below.  Let
\[
 U_j:L^2(\Sigma_j,d\sigma)\to L^2(U_j,d\xi),
 \qquad (U_jG)(\xi)=w_j(\xi)^{1/2}G(\Gamma_j(\xi)),
\]
which is unitary.  The localized surface extension is then exactly the graph extension applied to
\[
 w_j^{1/2}U_j(\chi_jG).
\]
Multiplication by the bounded scalar function $w_j^{1/2}$ is a bounded operator on the Hilbert variable $L^2(U_j)$.  Hence the ideal property of $\gamma$ gives
\[
 \|w_j^{1/2}U_j(\chi_jG)\|_{\ga(L^2(U_j),S^4)}
 \le \|w_j^{1/2}\|_\infty
 \|\chi_jG\|_{\ga(L^2(\Sigma),S^4)}
 \lesssim_j\|G\|_\ga.
\]
The local theorem and the triangle inequality in $L^4(S^4)$ therefore give
\[
 \|E_\Sigma G\|_{L^4(S^4)}
 \le\sum_j\|E_\Sigma(\chi_jG)\|_{L^4(S^4)}
 \lesssim_\Sigma\|G\|_\ga.
\]
Finite-rank operators are dense in $\ga$ by definition; approximating the finitely many scalar $L^2$ coefficient functions by smooth functions supported inside the coordinate patches shows that the smooth finite-rank surface amplitudes used above form a dense core.  This completes the proof.
\end{proof}

\begin{remark}[Semifinite noncommutative $L^4$ spaces]\label{rem:semifinite}
The local proof only uses the tracial Hilbert space $L^2(\cM,\tau)$, H\"older multiplication $L^4\cdot L^4\subset L^2$, and the row/column form of noncommutative Khintchine.  The same argument therefore extends, by standard approximation, from Schatten classes to semifinite noncommutative $L^4(\cM,\tau)$ spaces.  We state the Schatten version as the main theorem because it is the sharp classification needed below and avoids additional measure-topology notation.
\end{remark}

\section{Restriction duality and the sharp Schatten range}\label{sec:classification}

\subsection{The critical dual theorem}
For $1<a<\infty$, Schatten classes are $K$-convex.  The trace pairing identifies the dual of $\ga(H,S^a)$ with $\ga(H,S^{a'})$ up to constants depending only on $a$; see \cite{vanNeerven2010}.  Applying this to Theorem~\ref{thm:main-extension} proves Corollary~\ref{cor:critical-restriction}.

\subsection{The large-Schatten branch}
Scalar Stein--Tomas gives
\[
 E_\Sigma:L^2(\Sigma)\to L^4(\R^3).
\]
Proposition~\ref{prop:factorization} therefore gives
\[
 E_\Sigma:\ga(L^2(\Sigma),S^b)\to L^4(\R^3;S^b),
 \qquad 1<b\le2.
\]
Dualizing at $b=a'$ yields
\begin{equation}\label{eq:largeS}
 R_\Sigma:L^{4/3}(\R^3;S^a)\to\ga(L^2(\Sigma),S^a),
 \qquad 2\le a<\infty.
\end{equation}
Interpolation with Proposition~\ref{prop:L1} yields the full $1\le p\le4/3$ range for $a\ge2$.

\subsection{Interpolation below \texorpdfstring{$S^2$}{S2}}
We briefly spell out the interpolation because the domain and coefficient exponents move together.

For $4/3\le a\le2$, interpolate at the fixed spatial exponent $4/3$ between Corollary~\ref{cor:critical-restriction} and the Hilbert-valued endpoint
\[
 R_\Sigma:L^{4/3}(\R^3;S^2)\to\ga(L^2(\Sigma),S^2).
\]
For a finite orthonormal family $(h_j)_{j=1}^N\subset L^2(\Sigma)$, compose $R_\Sigma$ with the Gaussian coefficient map
\[
 T_{N,h}f
 =\sum_{j=1}^N\gamma_j
 \int_\Sigma \widehat f(\omega)\overline{h_j(\omega)}\,d\sigma(\omega).
\]
The two endpoint estimates are bounds into $L^2(\Omega;S^{4/3})$ and $L^2(\Omega;S^2)$, uniformly in $N$ and in the orthonormal family.  Ordinary complex interpolation applies because
\[
 [L^{4/3}(\R^3;S^{4/3}),L^{4/3}(\R^3;S^2)]_\theta
 =L^{4/3}(\R^3;S^a)
\]
and
\[
 [L^2(\Omega;S^{4/3}),L^2(\Omega;S^2)]_\theta
 =L^2(\Omega;S^a),
\]
where $1/a=(1-\theta)/(4/3)+\theta/2$.  Thus the finite Gaussian operators have the $S^a$ bound with the same uniformity.  Taking the supremum over finite orthonormal systems yields the $\gamma_\infty$ norm.  Since $S^a$ is reflexive for $1<a<\infty$ and hence contains no copy of $c_0$, the Hoffmann--J\o rgensen--Kwapie\'n theorem identifies $\gamma_\infty$ with $\gamma$; see \cite{vanNeerven2010}.  Thus
\begin{equation}\label{eq:midS}
 R_\Sigma:L^{4/3}(S^a)\to\ga(S^a),
 \qquad 4/3\le a\le2.
\end{equation}
Interpolation with $p=1$ gives all smaller $p$.

For $1<a<4/3$, apply the same finite-Gaussian argument between the universal endpoint
\[
 L^1(\R^3;S^1)\to\ga(S^1)
\]
and the critical $(p,a)=(4/3,4/3)$ theorem.  Interpolation of the domain Bochner spaces and the finite Gaussian targets gives the line $p=a$: indeed the same parameter satisfies $1/p=1-\theta/4=1/a$.  After taking the supremum, $\gamma_\infty=\gamma$ because $S^a$ is reflexive.  Hence
\begin{equation}\label{eq:smallSdiag}
 R_\Sigma:L^a(\R^3;S^a)\to\ga(S^a),
 \qquad 1<a<4/3.
\end{equation}
Interpolating once more with the universal $p=1$ estimate gives $1\le p\le a$.

\subsection{Necessity}
The scalar rank-one test $f(x)=g(x)x_0$ shows that \eqref{eq:classification} cannot hold beyond the scalar Stein--Tomas exponent $p=4/3$.  To see the coefficient-space obstruction, one may use the standard physical-translate argument.  Choose a compactly supported scalar $g$ whose Fourier transform does not vanish on a surface patch, and translations $y_j$ so far apart that the functions
\[
 e^{-iy_j\cdot\omega}\widehat g(\omega)
\]
form a uniformly Riesz sequence in $L^2(\Sigma)$.  For
\[
 f(x)=\sum_jg(x-y_j)x_j,
\]
disjoint physical supports give
\[
 \|f\|_{L^p(S^a)}
 =\|g\|_p\left(\sum_j\|x_j\|_a^p\right)^{1/p},
\]
while a bounded left inverse in the Hilbert variable of the $\gamma$ norm yields Gaussian type $p$ for $S^a$.  Since $S^a$ has maximal type $a$ for $a<2$, one must have $p\le a$.  This proves the sharpness in Theorem~\ref{thm:classification}.

\subsection{Type alone does not characterize the general Banach endpoint}\label{subsec:lorentz-counterexample}
The Schatten theorem reaches the coefficient-space type threshold, but this is a special feature of the Schatten geometry rather than a consequence of type alone.  The following proposition gives a concrete contrast.

\begin{proposition}\label{prop:lorentz-counterexample}
Let $2<q<\infty$ and let
\[
 X=L^{4/3,q}((0,1)^3)
\]
be equipped with an equivalent Banach lattice norm.  Then $X$ has Gaussian type $4/3$, but the spherical endpoint
\[
 R_{S^2}:L^{4/3}(\R^3;X)\longrightarrow\ga(L^2(S^2),X)
\]
is not bounded.
\end{proposition}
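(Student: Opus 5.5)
The Gaussian type assertion is a statement about the lattice $X$ alone; for it I would simply quote the classical type computation for Lorentz spaces (Creekmore's theorem). Since $q>2$ does not change the type index of $L^{4/3,q}$, this gives type $4/3$. The real content is the failure of the spherical endpoint, and I would prove it by contradiction. I expect this to be substantially harder than the classification argument, and I have not checked that the construction below actually produces the needed gap.

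\emph{Step 1: lattice reduction.} For a Banach function lattice of finite cotype, the $\gamma$-norm is equivalent to the square-function norm. Concretely,
\[
 \|R_{S^2}f\|_{\ga(L^2(S^2),X)}\asymp \Big\|\Big(\int_{S^2}|\widehat{f}(\omega,\cdot)|^2\,d\sigma(\omega)\Big)^{1/2}\Big\|_{X},
\]
where $f(x,y)$ is regarded as a function of $x\in\R^3$ and $y\in(0,1)^3$. The assumed bound therefore becomes the mixed-norm inequality
\[
 \big\|\,\|\widehat f(\cdot,y)\|_{L^2(S^2)}\big\|_{L^{4/3,q}_y}\lesssim \big\|\,\|f(x,\cdot)\|_{L^{4/3,q}_y}\big\|_{L^{4/3}_x}.
\]
Next, I would cut $(0,1)^3$ into dyadic level sets $A_k$ with $|A_k|=2^{-k}$, for $k=1,\dots,N$. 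On functions of the form $\sum_k b_k(y)\mathbf 1_{A_k}(y)$ with $b_k$ roughly of size $2^{3k/4}$ on $A_k$, the $L^{4/3,q}$ norm is comparable to an $\ell^q$-sum of block norms, while each block behaves like $L^{4/3}(A_k)$. The inequality thus reduces to a vector-valued estimate mixing $\ell^q_k$ with $L^{4/3}_x$ and $L^2(S^2)$.

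\emph{Step 2: the test family.} I would take $f(x,y)=\sum_k c_k\mathbf 1_{A_k}(y)\,f_k(x)$, where each $f_k$ is a Knapp example normalised by $\|f_k\|_{L^{4/3}}\simeq1$ and $\|\widehat f_k\|_{L^2(S^2)}\simeq1$. The caps should be pairwise separated, so that the restrictions are almost orthogonal. The left side is then $\simeq\|\sum_kc_k\mathbf 1_{A_k}\|_X\simeq(\sum_k|c_k|^q2^{-3kq/4})^{1/q}$, that is, $N^{1/q}$ for the flat choice $c_k=2^{3k/4}$. The right side is $\big\|(\sum_k|c_k2^{-3k/4}f_k(x)|^q)^{1/q}\big\|_{L^{4/3}_x}$.

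Three configurations are natural to test. Disjoint translates give a right side of $N^{3/4}$, which is the wrong direction. Identical supports give $N^{1/q}$, which is only a tie. Nested tubes of different Knapp scales, and a bush of tubes through the origin, also seem to give $N^{3/4}$; for the bush this comes from a computation in which $q>2$ enters through the integrability exponent $4/(3q)<2/3$. So the disjoint-block ($\ell^q$-sum) structure by itself is not enough, because $\ell^q$ for $q>2$ has type $2$ and satisfies the endpoint. The obstruction must instead exploit the internal $L^{4/3}$ structure of the blocks together with the $\ell^q$ gluing.

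\emph{Step 3: the main obstacle.} The key difficulty is to choose the tubes and the $y$-profiles jointly so that the $y$-profile seen at each fixed $x$ is spread across many levels, and is therefore small in $L^{4/3,q}$ relative to $L^{4/3}$, while the square function on the left stays concentrated. My first attempt would be to let the Knapp direction $\omega(y)$ vary continuously with $y$ inside each block, so that $f(x,\cdot)$ is supported on the set of $y$ whose tube contains $x$, and then to compute $\|f(x,\cdot)\|_{L^{4/3,q}}$ through the overlap function of a Kakeya-type bush at the scale $\delta_k$ attached to $A_k$. I would then aim for a ratio $\mathrm{LHS}/\mathrm{RHS}\gtrsim N^{\alpha}$ with $\alpha=\alpha(q)>0$ exactly when $q>2$, and let $N\to\infty$. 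Verifying that this exponent is positive is where I expect essentially all the work to lie.
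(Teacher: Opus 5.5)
Your type assertion via Creekmore and the square-function lower bound in Step~1 are fine. The unboundedness half, however, has a genuine gap: no counterexample is actually produced. As you observe yourself, the separated-variable test functions $\sum_k c_k\mathbf 1_{A_k}(y)f_k(x)$ give at best a tie. Step~3 only describes a hoped-for Kakeya/bush computation whose exponent $\alpha(q)$ is never shown to be positive. So the failure of the endpoint is not established.

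The missing idea is to couple $x$ and $y$ by translation, which reduces the question to the scalar Lorentz endpoint.

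\begin{itemize}
\item Fix $f\in C_c^\infty(\R^3)$ and set $F_L(x)(y)=f(x-Ly)$ for $y\in(0,1)^3$.
\item For fixed $x$, the distribution function of $y\mapsto f(x-Ly)$ is at most $L^{-3}$ times that of $f$. Hence $\|F_L(x)\|_X\lesssim L^{-9/4}\|f\|_{L^{4/3,q}(\R^3)}$.
\item In $x$, $F_L$ is supported in a set of measure $\approx L^3$. Therefore $\limsup_{L\to\infty}\|F_L\|_{L^{4/3}(\R^3;X)}\lesssim\|f\|_{L^{4/3,q}(\R^3)}$.
\item On the sphere, $(R_{S^2}F_L)(\omega)(y)=e^{-iLy\cdot\omega}\widehat f(\omega)$, whose modulus is independent of $y$. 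Your lattice lower bound then gives $\|R_{S^2}F_L\|_{\ga}\gtrsim\|\widehat f\|_{L^2(S^2)}$.
\end{itemize}

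Boundedness would therefore imply the scalar estimate $\|\widehat f|_{S^2}\|_{2}\lesssim\|f\|_{L^{4/3,q}(\R^3)}$. By Bak--Seeger, $L^{4/3,2}$ is the optimal Lorentz space for this estimate, and their multiscale Knapp examples show it fails for every $q>2$.

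This construction achieves exactly what you were seeking in Step~3. At each fixed $x$, the $y$-profile is a rescaled copy of $f$, spread over all of its Lorentz levels, while the square function on the output side is constant. The multiscale work is handed off to the known scalar sharpness result, so no bush or Kakeya computation is needed.
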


\begin{proof}
The type assertion follows from the classical Lorentz-space theorem of Creekmore \cite{Creekmore1981}: since $4/3<q$ and $4/3<2$, the space $L^{4/3,q}$ has Rademacher type $4/3$, equivalently Gaussian type $4/3$.

Suppose, to the contrary, that the displayed restriction estimate holds.  Let $f\in C_c^\infty(\R^3)$ and put $\Omega=(0,1)^3$.  For $L\ge1$ define
\[
 F_L(x)(y)=f(x-Ly),\qquad y\in\Omega.
\]
For every fixed $x$, the distribution function of $y\mapsto f(x-Ly)$ is bounded by $L^{-3}$ times that of $f$.  Hence the Lorentz dilation law and monotonicity give
\[
 \|F_L(x)\|_X\lesssim_q L^{-9/4}\|f\|_{L^{4/3,q}(\R^3)}.
\]
If $\supp f\subset[-R,R]^3$, then $F_L$ is supported in $L\Omega+[-R,R]^3$ as a function of $x$, so
\begin{equation}\label{eq:lorentz-translate-domain}
 \limsup_{L\to\infty}
 \|F_L\|_{L^{4/3}(\R^3;X)}
 \lesssim_q
 \|f\|_{L^{4/3,q}(\R^3)}.
\end{equation}
On the sphere,
\[
 (R_{S^2}F_L)(\omega)(y)
 =e^{-iLy\cdot\omega}\widehat f(\omega).
\]
For a Banach function lattice the Gaussian norm dominates the pointwise square-function norm.  Indeed, for a finite orthonormal family $(h_j)$ and coefficient functions $x_j\in X$,
\begin{align*}
 \left(\E\left\|\sum_j\gamma_jx_j\right\|_X^2\right)^{1/2}
 &\ge \E\left\|\sum_j\gamma_jx_j\right\|_X\\
 &\ge \left\|\E\left|\sum_j\gamma_jx_j\right|\right\|_X
 =c\left\|\left(\sum_j|x_j|^2\right)^{1/2}\right\|_X.
\end{align*}
Applying this to finite orthogonal projections in $L^2(S^2)$, using the $\gamma$-ideal property and then monotone convergence, gives
\[
 \|R_{S^2}F_L\|_{\ga(L^2(S^2),X)}
 \gtrsim_q
 \left\|\left(\int_{S^2}|(R_{S^2}F_L)(\omega)(\cdot)|^2\,d\sigma(\omega)\right)^{1/2}\right\|_X.
\]
The square function on the right is the constant function $\|R_{S^2}f\|_2$.  Combining this observation with \eqref{eq:lorentz-translate-domain} and letting $L\to\infty$ would therefore imply the scalar Lorentz estimate
\[
 \|R_{S^2}f\|_2\lesssim_q\|f\|_{L^{4/3,q}(\R^3)}.
\]
Bak and Seeger proved the stronger positive endpoint $L^{4/3,2}\to L^2(S^2)$ and showed that the second Lorentz index $2$ is optimal: the estimate fails for every $L^{4/3,q}$ with $q>2$ \cite{BakSeeger2011}.  Their finite multiscale Knapp examples may be truncated smoothly in physical space without changing the contradiction, because they are rapidly decreasing and $L^1\to L^2(S^2)$ restriction is bounded.  This contradiction proves the proposition.
\end{proof}

\begin{remark}\label{rem:lorentz-significance}
Proposition~\ref{prop:lorentz-counterexample} shows that the general Banach-space endpoint contains information beyond an attained Gaussian type exponent.  In particular, the positive Schatten result cannot be recovered from a type-only transfer principle.  The conclusion is already visible on finite-dimensional coefficient spaces: for every fixed $q>2$, the translation construction may be approximated in the internal variable by step functions and in the physical variable in both $L^{4/3}$ and $L^1$.  Proposition~\ref{prop:L1} controls the resulting output error.  Hence one obtains finite-dimensional subspaces $X_N\subset L^{4/3,q}$ whose type-$4/3$ constants stay uniformly bounded while their best spherical endpoint restriction constants tend to infinity.  Thus no dimension-free endpoint theorem can depend only on the type constant.

The same argument singles out $L^{4/3,2}$ as a natural Lorentz borderline.  Bak--Seeger give the scalar estimate $L^{4/3,2}(\R^3)\to L^2(S^2)$, so the translation obstruction above disappears at the second Lorentz index $2$; that scalar fact, however, does not by itself imply the $L^{4/3,2}$-valued $\gamma$ estimate.
\end{remark}

\section{Classical restriction theory as a special and neighboring theory}\label{sec:classical}

The purpose of this section is twofold: to make explicit which classical statements are direct consequences of our theorem, and to distinguish them from neighboring results that motivate but do not formally follow from it.

\subsection{Direct scalar recovery}
Take $S^a=\C$, or equivalently restrict Theorem~\ref{thm:main-extension} to a fixed rank-one operator direction.  Then $C_G=R_G=\|G\|_{L^2}$ and Theorem~\ref{thm:main-extension} becomes
\[
 \|E_\Sigma g\|_{L^4(\R^3)}\lesssim\|g\|_2.
\]
Duality gives the three-dimensional Stein--Tomas endpoint
\[
 \|\widehat f|_\Sigma\|_2\lesssim\|f\|_{4/3},
\]
and interpolation with $L^1$ gives $1\le p\le4/3$.  Thus the classical Tomas--Stein theorem on compact curved surfaces in $\R^3$ is literally the scalar subcase of the present result.

For $S^2$ with the standard normalization, sharp constants and extremizers constitute a separate sharp-constant problem; see Foschi and Oliveira e Silva \cite{Foschi2015,Oliveira2024}.  Our theorem is dimension-free in the internal matrix size, not a claim about the optimal scalar geometric constant.

\subsection{Fefferman, Tomas, Stein, and Carleson--Sj\"olin}
Stein first observed that curvature can permit $L^p\to L^2$ restriction to the sphere; early Fefferman--Stein bounds are recorded in Fefferman's 1970 paper \cite{Fefferman1970}.  Tomas \cite{Tomas1975} obtained the full nonendpoint range in all dimensions and ended his note by crediting Stein with the endpoint extension by complex interpolation; see \cite{Oliveira2024} for a concise historical account.  Fefferman's ball-multiplier theorem \cite{Fefferman1971} then displayed the complementary force of Kakeya geometry in Fourier multiplier theory.  Stein subsequently developed the oscillatory-integral framework in which restriction is now usually presented \cite{Stein1993}.  Carleson--Sj\"olin \cite{CarlesonSjolin1972} replaced translation-invariant extension by variable-phase oscillatory integrals.  Bak and Seeger later proved endpoint Lorentz refinements of Stein--Tomas and parallel endpoint improvements for Carleson--Sj\"olin--H\"ormander operators and spectral projections \cite{BakSeeger2011}.  Theorem~\ref{thm:main-extension} should therefore be viewed as a coefficient-space generalization of the three-dimensional Stein--Tomas endpoint, not as a generalization of the full variable-phase theory.

\subsection{Strichartz and dispersive equations}
Strichartz's 1977 paper \cite{Strichartz1977} treated broad classes of quadratic surfaces and explicitly connected restriction/extension with spacetime decay for solutions of wave and Klein--Gordon equations.  The paraboloid yields Schr\"odinger evolution.  Keel and Tao \cite{KeelTao1998} later established an abstract dispersive theorem giving the endpoint admissible family for a broad class of propagators.

Our Corollary~\ref{cor:schrodinger} is a coefficient-space analogue of one particularly symmetric classical Strichartz estimate: the $L^4_{t,x}$ estimate in two spatial dimensions.  It is not the forbidden two-dimensional endpoint $L_t^2L_x^\infty$, nor does it by itself give the full Keel--Tao mixed-norm family.  What is new is that the initial frequency amplitude may be genuinely noncommuting and is measured in the $\gamma(L^2,S^4)$ norm rather than the Bochner $L^2(S^4)$ norm.

\subsection{Bilinear, multilinear, and polynomial-partitioning restriction}
The decisive geometric idea in our proof is bilinear in spirit: a quartic expression is factored into two Hilbert--Schmidt products, and the favorable factorization is chosen according to the relative lengths of two frequency chords.  This is philosophically close to the bilinear restriction program of Tao--Vargas--Vega \cite{TaoVargasVega1998} and Tao \cite{Tao2003}, and to the multilinear transversality theory of Bennett--Carbery--Tao \cite{BCT2006}.  Bourgain--Guth \cite{BourgainGuth2011} and Guth \cite{Guth2018} use multilinear information and polynomial partitioning to go beyond the Stein--Tomas line in the scalar restriction conjecture.  Our theorem does not improve the scalar restriction exponents; instead it uses the special even exponent $4$ to obtain a new noncommutative amplification exactly at the Stein--Tomas point.

\subsection{Sogge, spectral clusters, and Kenig--Ruiz--Sogge}
Restriction estimates are local Euclidean models for spectral projection and Fourier integral estimates on manifolds.  Sogge's spectral-cluster bounds \cite{Sogge1988} are a fundamental manifestation of this principle.  Kenig--Ruiz--Sogge \cite{KRS1987} developed uniform Sobolev inequalities and unique continuation estimates in which the same curved-frequency geometry reappears through the resolvent.  Section~\ref{sec:PDE} derives a free spectral-measure consequence of our theorem.  A full $\gamma$/Schatten KRS resolvent theorem requires an additional singular-integral/cancellation argument and is intentionally not claimed here.

\subsection{Orthonormal systems and trace ideals}
Frank--Lewin--Lieb--Seiringer \cite{FrankLewinLiebSeiringer2014} and Frank--Sabin \cite{FrankSabin2017} developed restriction and Strichartz estimates for orthonormal systems, naturally expressed through Schatten trace ideals.  Those estimates are powerful in many-body and density-matrix PDE.  The present theorem concerns a different noncommutative object.  Rather than a scalar density
\[
 \sum_j\nu_j|E h_j|^2,
\]
we estimate a coherent matrix field
\[
 \sum_j(E h_j)x_j
\]
with arbitrary noncommuting coefficients $x_j$.  The right norm simultaneously sees
\[
 \sum_jx_j^*x_j
 \quad\text{and}\quad
 \sum_jx_jx_j^*.
\]
This coherent row/column geometry is invisible in scalar occupation-number estimates.

\section{Applications to PDE and spectral theory}\label{sec:PDE}

\subsection{Matrix Herglotz waves and Helmholtz}
For $G:S^2\to S^4$, define the matrix Herglotz wave
\begin{equation}\label{eq:Herglotz}
 U(x)=\int_{S^2}e^{ix\cdot\omega}G(\omega)\,d\sigma(\omega).
\end{equation}
Entrywise,
\[
 (\Delta+1)U=0.
\]
Theorem~\ref{thm:main-extension} gives
\begin{equation}\label{eq:Herglotz-bound}
 \boxed{
 \|U\|_{L^4(\R^3;S^4)}
 \lesssim
 \|G\|_{\ga(L^2(S^2),S^4)}.
 }
\end{equation}
This controls coherent multichannel Helmholtz fields.  In scattering language, the angular data may represent coupled polarization or channel amplitudes; noncommutativity allows interactions that cannot be diagonalized once and for all in the angular variable.

By duality, a compactly supported $S^{4/3}$-valued source has a $\gamma$-radonifiable far-field restriction:
\[
 \|\widehat f|_{S^2}\|_{\ga(L^2(S^2),S^{4/3})}
 \lesssim\|f\|_{L^{4/3}(\R^3;S^{4/3})}.
\]
The scalar far-field estimate is the rank-one special case.

\subsection{Schr\"odinger evolution in two space dimensions}
For the paraboloid
\[
 \phi(\xi)=|\xi|^2,
\]
the resonance function satisfies exactly
\[
 \Phi_{\xi,v}(u)=-2u\cdot v.
\]
Thus the fiber is a line perpendicular to $v$ and
\[
 \delta(u\cdot v)\,du
 =|v|^{-1}d\cH^1_{v^\perp}(u).
\]
Theorem~\ref{thm:local} therefore holds globally for compactly supported data without a curvature-perturbation error.  After the harmless Fourier normalization this is Corollary~\ref{cor:schrodinger}.

The significance of \eqref{eq:schrodinger} is not merely ``matrix-valued Strichartz.''  A generic $S^4$-valued $L^2_x$ function and a $\gamma(L^2_x,S^4)$ element have different geometry.  The $\gamma$ norm records randomized coherent superpositions and is the natural input norm for cylindrical Gaussian forcing and for many Banach-space stochastic integration problems.

\subsection{Saddle dispersion and negative curvature}
The hyperbolic model \eqref{eq:hyperbolic-model} gives a second exact quadratic test.  If
\[
 V(t,x)=\int_{\R^2}e^{i(x\cdot\xi+t(\xi_1^2-\xi_2^2))}G(\xi)\,d\xi,
\]
then the resonance function is
\[
 \Phi_{\xi,v}(u)=-2(u_1v_1-u_2v_2),
\]
so the coarea fiber is again a line and the two-chord proof applies without perturbative geometry.  For compactly supported $S^4$-valued data,
\begin{equation}\label{eq:hyperbolic-L4}
 \boxed{
 \|V\|_{L^4_{t,x}(S^4)}
 \lesssim
 \|G\|_{\gamma(L^2(\R^2),S^4)}.
 }
\end{equation}
Equivalently, this is an $L^4$ estimate for the matrix-valued saddle-dispersive equation $i\partial_tV+(\partial_{x_1}^2-\partial_{x_2}^2)V=0$.  Scalar hyperbolic restriction is part of the classical theory, including modern results for general smooth negative-curvature surfaces \cite{BuschenhenkeMullerVargas2023}; \eqref{eq:hyperbolic-L4} displays the additional coherent row/column information retained by the Schatten-valued theorem.

\subsection{Frequency-localized Klein--Gordon and other nondegenerate graphs}
Let
\[
 \phi_m(\xi)=\sqrt{m^2+|\xi|^2}.
\]
On every compact frequency annulus, $D^2\phi_m$ is nonsingular, so Theorem~\ref{thm:local} gives a frequency-localized $S^4$-valued $L^4_{t,x}$ extension estimate for the two-dimensional Klein--Gordon flow.  More generally, any compactly supported dispersive graph with uniformly nonsingular Hessian admits the same local theorem.

The cone is different: one principal curvature vanishes.  Therefore the wave equation requires cone-specific restriction geometry and is not covered by simply setting $\phi(\xi)=|\xi|$ in Theorem~\ref{thm:local}.  This distinction mirrors the classical literature beginning with Strichartz \cite{Strichartz1977}.

\subsection{A free spectral-measure consequence}
Let $P=\sqrt{-\Delta}$ on $\R^3$.  With the unit-sphere normalization, write
\[
 R_\lambda f(\omega)=\widehat f(\lambda\omega),
 \qquad
 E_\lambda g(x)=\int_{S^2}e^{i\lambda x\cdot\omega}g(\omega)\,d\sigma(\omega).
\]
Scaling gives
\[
 \|R_\lambda\|_{L^{4/3}(S^{4/3})\to\ga(S^{4/3})}
 \lesssim \lambda^{-3/4},
\]
and Proposition~\ref{prop:factorization} gives the matching extension bound with $S^{4/3}$ values.  Since
\[
 \frac{dE_P}{d\lambda}(\lambda)
 =c\lambda^2E_\lambda R_\lambda,
\]
we obtain
\begin{equation}\label{eq:spectral-P}
 \boxed{
 \left\|\frac{dE_P}{d\lambda}(\lambda)\right\|_{L^{4/3}(S^{4/3})\to L^4(S^{4/3})}
 \lesssim \lambda^{1/2}.
 }
\end{equation}
For $A=-\Delta$ and energy $\rho=\lambda^2$,
\begin{equation}\label{eq:spectral-A}
 \boxed{
 \left\|\frac{dE_A}{d\rho}(\rho)\right\|_{L^{4/3}(S^{4/3})\to L^4(S^{4/3})}
 \lesssim \rho^{-1/4}.
 }
\end{equation}
The exponent is the same scaling exponent that appears on the conjugate line in free uniform Sobolev theory.  However, integrating the spectral measure against $(\rho-z)^{-1}$ requires principal-value cancellation near the spectrum.  Equations \eqref{eq:spectral-P}--\eqref{eq:spectral-A} are therefore a precursor to, not a substitute for, a KRS-type resolvent theorem.

\subsection{Radonification of Gaussian surface fields}
Let $K$ be a Hilbert noise space and
\[
 \Psi\in\ga\big(K,\ga(L^2(\Sigma),S^4)\big).
\]
By the ideal property \eqref{eq:gamma-ideal} and Theorem~\ref{thm:main-extension},
\begin{equation}\label{eq:stochastic}
 E_\Sigma\circ\Psi
 \in\ga\big(K,L^4(\R^3;S^4)\big),
 \qquad
 \|E_\Sigma\Psi\|_\ga
 \lesssim\|\Psi\|_\ga.
\end{equation}
Thus a cylindrical Gaussian angular field whose covariance operator is radonifying at the surface level becomes an actual $L^4_x(S^4)$-valued Gaussian Herglotz field.  This is the safe stochastic formulation: homogeneous spectral white noise with constant pointwise variance is not globally $L^4_x$ without an additional radonifying/spatial localization assumption.

\subsection{Why the noncommutative extension can matter}
The classical scalar estimate treats one amplitude at a time.  Hilbert tensorization treats many channels when their internal geometry is orthogonal.  Orthonormal-density inequalities treat incoherent occupation numbers.  Theorem~\ref{thm:main-extension} instead controls coherent sums with noncommuting internal amplitudes.  This is relevant whenever the field itself is matrix/operator valued, for example:
\begin{itemize}
\item finite or countable channel scattering with noncommuting coupling matrices;
\item density-operator or covariance amplitudes before taking a scalar trace;
\item matrix-valued random waves with correlated channels;
\item linearized systems in which spectral polarization changes with frequency and cannot be simultaneously diagonalized.
\end{itemize}
The theorem does not solve these PDEs by itself.  It provides a dimension-independent linear estimate in exactly the coefficient geometry needed to begin perturbative, stochastic, or nonlinear arguments without first discarding coherent operator structure.

\section{Further directions}

\subsection{Banach-space geometry}
For a general Banach space $X$, $\gamma$-valued restriction at exponent $p$ forces Gaussian type $p$, and on bounded flat frequency sets type $p$ is sufficient.  Proposition~\ref{prop:lorentz-counterexample} shows that curvature does not make type alone sufficient at the three-dimensional Stein--Tomas endpoint: $L^{4/3,q}$ has type $4/3$ for $q>2$ but fails the endpoint restriction estimate.  Thus a general characterization must contain finer information reflecting how curvature-generated contributions from different scales are aggregated.  The Schatten theorem shows that noncommutative row/column geometry supplies enough additional structure to reach the type threshold in that scale.  The borderline space $L^{4/3,2}$ is an especially concrete next test.  We do not propose a general characterization here; identifying an intrinsic Banach-space property behind endpoint $\gamma$-restriction is a separate problem.

\subsection{Variable phases}
The next natural extension is a $\gamma(S^4)\to L^4(S^4)$ theorem for Carleson--Sj\"olin/H\"ormander oscillatory integrals with quantitatively controlled phase geometry.  The local proof here suggests the correct mechanism: resonance fibers, two adjacent increments, and a scale-dependent choice of row or column factorization.  Carrying this out uniformly under variable phase localization is a separate theorem.

\subsection{Uniform resolvents}
The spectral-measure estimate \eqref{eq:spectral-A} has the KRS scaling, but a resolvent requires the cancellation of a singular Cauchy integral.  A future Schatten/$\gamma$ KRS theorem must therefore add a limiting-absorption or analytic-family argument.  Frank--Sabin's trace-ideal uniform Sobolev theory \cite{FrankSabin2017} provides an important neighboring model, while the present theorem supplies a different coherent coefficient geometry.

\subsection{Higher dimensions and even exponents}
In dimension two the Stein--Tomas dual exponent for the circle is $6$.  A three-fold alternating product of $S^6$ coefficients again lands in $S^2$, suggesting a trilinear analogue of the present proof.  More generally, even exponents may permit a hierarchy of noncommutative multilinearizations.  The combinatorics of resonance fibers becomes substantially more complicated, and we do not pursue it here.

\section*{Acknowledgments and disclosure}
The authors received no external funding for this work.

The authors used Gemini for language, organization and bibliographic assistance.  The authors take full responsibility for the mathematical content and the cited sources.

\end{document}